  \def\?[#1]{\textbf{[#1]}\marginpar{\Large{\textbf{??}}}}%
\numberwithin{equation}{section}
\def\arXiv#1{\href{http://arxiv.org/abs/#1}{arXiv:#1}}
\newtheorem{theo}{Theorem}
\newtheorem{prop}{Proposition}[section]
\newtheorem{defi}[prop]{Definition}
\newtheorem{lemm}[prop]{Lemma}
\DeclareMathOperator{\Ell}{ell}
\let\Im=\Imag 
\DeclareMathOperator{\loc}{loc}
\DeclareMathOperator{\Op}{Op}
\let\Re=\Real 
\DeclareMathOperator{\supp}{supp}
\DeclareMathOperator{\WF}{WF}
\newcommand{\RR}{{\mathbb R}}
\title[Sharp radial estimates]%
{Sharp radial estimates in Besov spaces}
\author{Jian Wang}
\email{wangjian@berkeley.edu}
\address{Department of Mathematics, University of California, Berkeley,
CA 94720}
\begin{document}

\begin{abstract}
We prove sharp radial estimates using Besov spaces. We also prove the propagation of singularities in Besov spaces.
\end{abstract}

\maketitle

\section{Introduction}

\subsection{Main results}
Radial estimates are powerful tools in microlocal analysis. They were first introduced by Melrose \cite{mel} in the study of scattering theory for asymptotically Euclidean manifolds. They have been developed in various settings since then and part of the literature is listed here. Hassell-Melrose-Vasy \cite{hmv} used radial estimates to study the scattering theory for zeroth order symbolic potentials. Vasy \cite{va} and Datchev-Dyatlov \cite{dd} used radial estimates in the study of asymptotically hyperbolic scattering theory. Vasy \cite{va}, Dyatlov \cite{dy} and Hintz-Vasy \cite{hv} applied radial estimates to general relativity. In hyperbolic dynamics, radial estimates were introduced by Dyatlov-Zworski \cite{zeta}. Radial estimates were also applied to forced waves by Dyatlov-Zworski \cite{wave} and Colin de Verdi\`ere \cite{co}.

We first review the radial estimates presented in \cite[\S E.4]{res}. We will work in mocrolocal setting and put $h=1$ in \cite[\S E.4]{res} correspondingly. Let $M$ be a smooth manifold and $P\in \Psi^{k}(M)$, $k>0$, be a properly supported pseudodifferential operator. We define
\begin{equation}
    \Re{P}:=\tfrac{P+P^*}{2}, \quad \Im{P}:=\tfrac{P-P^*}{2i}.
\end{equation}
Then $\Re{P}, \Im{P}\in \Psi^{k}(M)$ are self-adjoint and $P=\Re{P}+i\Im{P}$. 
\begin{enumerate}
\item
(\cite[Theorem E.52]{res})
Suppose $p, q\in S^{k}(T^*M;\RR)$ are the principal symbols of $\Re{P}$ and $-\Im{P}$. We consider the rescaled Hamiltonian flow $\varphi_t:=\exp(t\langle \xi \rangle H_p)$ on the compactified cotangent bundle $\overline{T}^*M$, see \S \ref{preliminaries}. Suppose $\Lambda^{-}$ is the radial source of $\varphi_t$ (for the precise meaning, see \S \ref{preliminaries}). If $q=0$ near $\Lambda^-$ and 
\begin{equation}
    \langle \xi \rangle^{1-k}\left( \sigma_{k-1}(\Im P)+(s+\tfrac{1-k}{2})\tfrac{H_p\langle \xi \rangle}{\langle \xi \rangle} \right)
\end{equation}
is eventually negative on $\Lambda^-$ with respect to $p$, then for any $B_1\in \Psi^0(M)$ such that $\Lambda^-\subset \Ell(B_1)$, there exist $A\in \Psi^0(M)$, $\chi\in C_c^{\infty}(M)$, $N\in\RR$, such that $\Lambda^-\subset \Ell(A)$, $\WF(A)\subset \Ell(B_1)$ and for any $u\in H_{\loc}^{s}$, $Pu\in H^{s-k+1}_{\loc}$, we have
\begin{equation}
\label{sobsource}
    \|Au\|_{H^s}\leq C\|B_1Pu\|_{H^{s-k+1}}+C\|\chi u\|_{H^{-N}}.
\end{equation}
Note that since $\Lambda^-$ is the radial source, there exists a maximal $s_-\in \RR$ such that \eqref{sourcecri} holds near $\Lambda^-$ for any $s>s_-$.
\item
(\cite[Theorem E.54]{res})
On the other hand, suppose $\Lambda^+$ is a radial sink of $\varphi_t$ (see \S \ref{preliminaries}), $q=0$ near $\Lambda^+$ and \eqref{sourcecri} is eventually negative on $\Lambda^+$ with respect to $p$. Then for any $B_1\in \Psi^0(M)$ such that $\Lambda^+\subset \Ell(B_1)$, there exists $A, B\in \Psi^0(M)$ such that $\Lambda^+\subset \Ell(A)$, $\WF(B)\subset \Ell(B_1)\setminus \Lambda^+$, and there exists $\chi\in C_c^{\infty}(M)$ such that for all $N$ and $u\in H^{s}_{\loc}$, $Pu\in H^{s-k+1}_{\loc}$, we have
\begin{equation}
\label{sobsink}
    \|Au\|_{H^s}\leq C\| B_1Pu \|_{H^{s-k+1}}+C\|Bu\|_{H^s}+C\|\chi u\|_{H^{-N}}.
\end{equation}
Note that when $\Lambda^+$ is the radial sink, there exists a minimal $s_+\in \RR$ such that \eqref{sourcecri} holds near $\Lambda^+$ for any $s<s_+$.
\end{enumerate}

The radial estimates \eqref{sobsource} and \eqref{sobsink} can only be applied to Sobolev spaces with restrictions on the Sobolev regularity. It is natural to ask if we can get similar estimates in other function spaces with critical regularity. In particular, if we consider Besov spaces, we have the following 

\begin{theo}
\label{theo1}
Suppose $P\in \Psi^k(M)$, $k>0$, and $\Lambda^-$ is the radial source with respect to $p=\Re{\sigma(P)}$. Assume that $\langle \xi \rangle^{-k}\Im\sigma(P)=0$ near $\Lambda^-$ and there exists $T>0$ such that
\begin{equation}
\label{sourcecri}
    \int_0^T \langle \xi \rangle^{1-k}\left(\sigma_{k-1}(\Im P)+(s_-+\tfrac{1-k}{2})\tfrac{H_p\langle \xi \rangle}{\langle \xi \rangle}\right)\circ \varphi_t dt\leq 0
\end{equation}
in a neighborhood of $\Lambda^-$.
Then there exists a conic neighborhood $U$ of $\Lambda^-$, such that for any $B_1\in \Psi^0(M)$ with $\Lambda^-\subset \Ell(B_1)\subset U$, there exists $A\in \Psi^0(M)$, $\chi\in C_c^{\infty}(M)$ such that $\Lambda^-\subset \Ell(A)$ and for any $u\in \mathscr{D}^{\prime}(M)$, if $B_1u\in B^{s_-}_{2,\infty}(M)$, $B_1Pu\in B^{s}_{2,1}(M)$ for some $s>s_-$, then for any $N\in \RR$, we have $Au\in B_{2,\infty}^{s}$ and
\begin{equation}
\label{sourceest}
    \|Au\|_{B_{2,\infty}^{s}}\leq C\|B_1 Pu\|_{B_{2,1}^{s-k+1}}+C\|\chi u\|_{H^{-N}}.
\end{equation}
\end{theo}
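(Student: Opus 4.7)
The approach is a positive-commutator argument of the Melrose--Vasy type, modified in two ways: the commutant is built from the \emph{integrated} sign condition \eqref{sourcecri} rather than from an eventual-negativity assumption, and the analysis is carried out dyadically in frequency via a Littlewood--Paley decomposition $u = \sum_j \Delta_j u$, so that the resulting estimates can be reassembled into the $B^s_{2,\infty}$ and $B^{s-k+1}_{2,1}$ norms on the two sides of \eqref{sourceest}.

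First I would construct a commutant symbol $a \in S^s(T^*M)$, elliptic at $\Lambda^-$ and microsupported in a small conic neighborhood $U$ of $\Lambda^-$, by transporting a homogeneous symbol $a_0$ of degree $s$ elliptic at $\Lambda^-$ along $\varphi_t$ for $t\in[0,T]$ with an exponential weight whose logarithmic derivative along $\varphi_t$ equals
$$2\langle\xi\rangle^{1-k}\left(\sigma_{k-1}(\Im P) + \bigl(s_- + \tfrac{1-k}{2}\bigr)\tfrac{H_p\langle\xi\rangle}{\langle\xi\rangle}\right),$$
so that $H_p(a^2)$ contains exactly the integrand of \eqref{sourcecri} times $a^2$. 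By \eqref{sourcecri}, this weight is nonpositive in total over $[0,T]$. Quantizing $A = \Op(a)$, the principal symbol of $i[P, A^*A] + 2 A^*A\,\Im P$ is then pointwise controlled on $\Ell(B_1)$, with a boundary contribution microsupported in $\varphi_T(\supp a_0) \subset \Ell(B_1) \setminus \Lambda^-$.

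Next I would apply the standard identity
$$2\Im\langle Pu, A^*Au\rangle = \langle i[P,A^*A] u, u\rangle + 2\langle (\Im P) A^*A u, u\rangle,$$
suitably regularized so it is valid under the a priori assumption $B_1 u \in B^{s_-}_{2,\infty}$, to each Littlewood--Paley piece $\Delta_j u$. Combined with sharp G{\aa}rding, this produces, for every dyadic scale $j$, a schematic bound
$$2^{js}\|\Delta_j A u\|_{L^2} \leq C\,2^{j(s-k+1)}\|B_1 P \Delta_j u\|_{L^2} + C\,R_j + C\|\chi u\|_{H^{-N}},$$
where $R_j$ comes from the boundary term at $t=T$ and is controlled by the $B^{s_-}_{2,\infty}$ regularity of $B_1 u$ near $\varphi_T(\Lambda^-)$. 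Taking the supremum over $j$ gives $\|Au\|_{B^s_{2,\infty}}$ on the left; the summability of $\{2^{j(s-k+1)}\|B_1 P \Delta_j u\|_{L^2}\}_j$, equivalent to $B_1 Pu \in B^{s-k+1}_{2,1}$, makes the first term on the right uniformly bounded in $j$, and a bootstrap/regularization argument eliminates the boundary contribution $R_j$.

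The main obstacle is the first step: constructing the commutant so that the boundary term at $t = T$ lives at the $s_-$ level (in dyadic terms, scales as $2^{-js_-}$) rather than the $s$ level, since the latter would blow up uniformly over $j$. The integrated condition \eqref{sourcecri}, evaluated at the threshold $s_-$, is what permits this: the construction must effectively interpolate the commutant order from $s$ near $\Lambda^-$ down to $s_-$ at the $t = T$ boundary, so that the a priori hypothesis $B_1 u \in B^{s_-}_{2,\infty}$ can absorb the boundary contribution. A secondary technical point is controlling the commutators $[A, \Delta_j]$ (of order $s-1$) uniformly in $j$ so that the dyadic decomposition is compatible with the symbol calculus; the mismatch between $B^s_{2,\infty}$ and $B^{s-k+1}_{2,1}$ in \eqref{sourceest} is precisely what allows the resulting per-scale estimates to close.
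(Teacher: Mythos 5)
Your general strategy --- a positive-commutator estimate with an exponential gauge built from the integrated condition \eqref{sourcecri}, plus a dyadic mechanism to produce Besov norms --- matches the spirit of the paper's argument, and the weight whose logarithmic derivative along $\varphi_t$ equals the integrand is exactly the role of the gauge $e^b$ supplied by \cite[Proposition E.51]{res}. But the Littlewood--Paley decomposition of $u$ is not the paper's mechanism, and as sketched it does not close.

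The paper parameterizes the commutant by the dyadic scale rather than decomposing $u$: with $f_1$ the escape function of Lemma~\ref{sourcefun}, it takes $G_h=\Op(\langle\xi\rangle^{s_-+\frac{1-k}{2}}\chi_1\rho_1(hf_1)e^b)$. The derivative term $h\rho_1\rho_1^{\prime}(hf_1)H_pf_1$ supplies the positive gain $\sim h^{-2s_-}E_h^*E_h$ concentrated on the annulus $|\xi|\sim 1/h$, and taking the supremum over dyadic $h$ of $h^{-s_-}\|E_h u\|_{L^2}$ is what recovers $\|Au\|_{B_{2,\infty}^{s_-}}$. A fixed commutant $A\in\Psi^s$ applied to $\Delta_j u$ has no such per-scale gain at the threshold: at $s=s_-$ the symbol of $i[P,A^*A]+2A^*A\,\Im P$ is only $\leq 0$ near $\Lambda^-$, not $\leq -c\langle\xi\rangle^{2s_-}$ on any fixed region, so nothing like $\|A\Delta_j u\|_{L^2}^2$ comes out of sharp G{\aa}rding applied scale by scale.

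The clearest sign that the plan cannot be right is your schematic bound $2^{js}\|\Delta_j Au\|_{L^2}\lesssim 2^{j(s-k+1)}\|B_1P\Delta_j u\|_{L^2}+R_j+\|\chi u\|_{H^{-N}}$. Taking $\sup_j$ of this would bound $\|Au\|_{B^s_{2,\infty}}$ by $\|B_1Pu\|_{B^{s-k+1}_{2,\infty}}$, a strictly stronger conclusion than \eqref{sourceest}, making the $B_{2,1}$ hypothesis superfluous. In the paper the $B_{2,1}$ norm is forced by the Besov duality applied to the pairing,
\begin{equation*}
|\langle G_h Pu, G_h u\rangle|\ \leq\ \|G_h Pu\|_{B^{(1-k)/2}_{2,1}}\,\|G_h u\|_{B^{(k-1)/2}_{2,\infty}}\ \leq\ \|B_1Pu\|_{B^{s_--k+1}_{2,1}}\|Au\|_{B^{s_-}_{2,\infty}},
\end{equation*}
and this pairing genuinely does not decouple in the dyadic index: $G_h$ involves all frequencies $\gtrsim 1/h$, not one annulus. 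Any honest Littlewood--Paley version would have to confront this cross-scale coupling, and your plan assumes it away.

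Two smaller mismatches: the error term coming from sharp G{\aa}rding (your $R_j$) is $\|B_2u\|_{H^{s_--1/2}}$, and it is removed by the propagation estimate from $\Ell(A)$ to $\Ell(B_2)$ together with interpolation, not absorbed by the $B_{2,\infty}^{s_-}$ hypothesis; and the upgrade from regularity $s_-$ (the a priori level, where Step~1 is carried out) to $s>s_-$ is done by conjugation with $X_\tau=\Op(\langle\tau\xi\rangle^{-(s-s_-)})$, which raises the effective threshold of $P_\tau$ to $s$. Your idea of ``interpolating the commutant order from $s$ down to $s_-$'' is the right intuition for this second point, but it is implemented through the family $X_\tau$ and Lemma~\ref{xylemm}, not through a single variable-order commutant, and you leave it at the level of intention.
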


\begin{theo}
\label{theo2}
Suppose $P\in \Psi^k(M)$, $k>0$, and $\Lambda^+$ is the radial sink with respect to $p=\Re\sigma(P)$. Assume that $\langle \xi \rangle^{1-k}\Im\sigma(P)=0$ near $\Lambda^+$ and there exists $T>0$ such that
\begin{equation}
\label{sinkcri}
    \int_0^T\langle \xi \rangle^{1-k}\left( \sigma_{k-1}(\Im P)+\left( s_++\tfrac{1-k}{2} \right)\tfrac{H_p\langle \xi \rangle}{\langle \xi \rangle} \right) \circ \varphi_t dt \leq 0
\end{equation}
near $\Lambda^+$. Then there exists a conic neighborhood $U$ of $\Lambda^+$ such that for any $B_1\in \Psi^0(M)$ with $\Lambda^+\subset \Ell(B_1)$ and $\WF(B_1)\subset U$, there exist $A, B \in \Psi^0(M)$ such that $\Lambda^+\subset \Ell(A)$, $\WF(B)\subset \Ell(B_1)\setminus \Lambda^+$ and there exists $\chi\in C_c^{\infty}(M)$ such that for any $N\in \RR$ and $u\in \mathscr{D}^{\prime}(M)$, if $Bu\in B_{2,1}^{s_+}$, $B_1Pu\in B_{2,1}^{s_+-k+1}$, then $Au\in B_{2,\infty}^{s_+}$ and
\begin{equation}
\label{sharpsink}
    \|Au\|_{B_{2,\infty}^{s_+}}\leq C\|Bu\|_{B_{2,1}^{s_+}}+ C\|B_1Pu\|_{B_{2,1}^{s_+}}+C\|\chi u\|_{H^{-N}}.
\end{equation}
\end{theo}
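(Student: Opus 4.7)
The plan is to adapt the positive commutator argument from the proof of the Sobolev sink estimate \cite[Theorem E.54]{res} to the Besov setting, handling the critical regularity $s=s_+$ via a Littlewood--Paley decomposition that matches the $B^{s}_{2,\infty}$ / $B^{s}_{2,1}$ dichotomy to a uniform-in-frequency estimate on dyadic pieces. Fix a dyadic partition $1=\sum_{j\ge 0}\psi_j(\xi)$ with $\supp\psi_j\subset\{2^{j-1}\le\langle\xi\rangle\le 2^{j+1}\}$ for $j\ge 1$, and let $u_j := \Op(\psi_j)u$; then $\|u\|_{B^{s}_{2,\infty}}\sim\sup_j 2^{js}\|u_j\|_{L^2}$ and $\|u\|_{B^{s}_{2,1}}\sim\sum_j 2^{js}\|u_j\|_{L^2}$, so \eqref{sharpsink} will follow from a uniform-in-$j$ dyadic estimate together with the elementary $\sup_j\le\sum_j$.

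The heart of the proof is a sink commutant $a\in S^{s_+}(T^*M)$ that, at the threshold $s=s_+$, converts the integrated condition \eqref{sinkcri} into pointwise positivity suitable for sharp G\aa rding. I would take $a = \chi_0\,\tilde a\,\langle\xi\rangle^{s_+}$, where $\chi_0$ is a cutoff to a small conic neighborhood $V$ of $\Lambda^+$ on which the sink dynamics apply, and $\tilde a$ is obtained by pulling back a nonnegative cutoff $\tilde\chi$ along the rescaled flow $\varphi_t$ over $[0,T]$, in the spirit of a Duhamel-type flow average. The Hamilton derivative $H_p\tilde a$ then produces a difference of two flow-pushed cutoffs, one of which furnishes the main $Au$ term (elliptic at $\Lambda^+$) and the other the $Bu$ term (microsupported in $\Ell(B_1)\setminus\Lambda^+$); the sign-indefinite contribution coming from differentiating $\langle\xi\rangle^{s_+}$ together with $\sigma_{k-1}(\Im P)$ is exactly the integrand of \eqref{sinkcri} multiplied by $a^2$, and \eqref{sinkcri} ensures its flow-averaged version is $\le 0$.

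With the commutant in hand, I would run the standard identity
\begin{equation}
\langle i[P,A^*A]u_j,u_j\rangle + 2\langle (\Im P)A^*A u_j,u_j\rangle = 2\Im\langle A^*A Pu_j,u_j\rangle
\end{equation}
on each dyadic piece $u_j$, with $A=\Op(a\langle\xi\rangle^{-s_+})\in\Psi^0$, and apply sharp G\aa rding to the nonnegative symbol produced in the previous step. After a Cauchy--Schwarz absorption and using that all lower-order remainders at frequency $2^j$ are bounded by $O(2^{-j})$ times the main quantity plus $\|\chi u\|_{H^{-N}}$, one obtains, uniformly in $j$,
\begin{equation}
2^{js_+}\|Au_j\|_{L^2}\le C\,2^{js_+}\|Bu_j\|_{L^2} + C\,2^{j(s_+-k+1)}\|B_1 Pu_j\|_{L^2} + C\|\chi u\|_{H^{-N}},
\end{equation}
for appropriate $A,B\in\Psi^0$ with $\Lambda^+\subset\Ell(A)$ and $\WF(B)\subset\Ell(B_1)\setminus\Lambda^+$. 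Taking $\sup_j$ on the left and $\sum_j$ on the right, and invoking the hypotheses $Bu\in B^{s_+}_{2,1}$ and $B_1Pu\in B^{s_+-k+1}_{2,1}$, yields \eqref{sharpsink}.

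The main obstacle is establishing the positivity at exactly the threshold $s=s_+$: in the Sobolev sub-threshold argument the sign-indefinite term has a strictly negative pointwise sign near $\Lambda^+$ and positivity is immediate, whereas at $s=s_+$ one only has the integrated inequality \eqref{sinkcri}. The flow-averaging construction of $\tilde a$ is the technical device bridging this gap, and the delicate points are (i) verifying that the boundary contribution $\tilde\chi\circ\varphi_{T}-\tilde\chi$ decomposes cleanly into an elliptic piece at $\Lambda^+$ and a piece microsupported in $\Ell(B_1)\setminus\Lambda^+$, which forces $T$, $V$, and $\tilde\chi$ to be chosen so that $\varphi_T$ separates these regions, and (ii) checking that the constants in the sharp G\aa rding and commutator estimates are genuinely uniform in the dyadic frequency index $j$, which uses the scale invariance of the standard symbol class $S^m_{1,0}$ under conjugation by $\Op(\psi_j)$.
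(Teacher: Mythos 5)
Your plan has a genuine gap at the central technical step, the choice of frequency localizer inserted into the commutant. You propose to run the positive commutator identity on each Littlewood--Paley piece $u_j=\Op(\psi_j)u$ with $\psi_j$ supported in a dyadic shell $\langle\xi\rangle\sim 2^j$, and to obtain a uniform-in-$j$ estimate. But the Hamilton flow does not respect dyadic shells: $H_p\psi_j$ has \emph{indefinite} sign (the inflow and outflow faces of the shell contribute with opposite signs), so once $\psi_j$ enters the commutant its $H_p$-derivative produces a term of the same order as the main commutator term that cannot be absorbed. Equivalently, the commutator $[P,\Op(\psi_j)]$ is of order $k-1$ and localized at frequency $2^j$, i.e.\ of size $\sim 2^{j(k-1)}$ on the shell; this is \emph{not} ``$O(2^{-j})$ times the main quantity'' as you assert, but rather comparable to the term you are trying to estimate. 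This is precisely the obstruction that forces the paper to use a different localizer.

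What the paper does instead is to insert a one-parameter family of \emph{half-infinite} frequency cutoffs $\rho_2(hf_2)$, where $\rho_2\in C_c^\infty(\RR)$, $\rho_2\equiv 1$ near $0$, $\rho_2'\le 0$ on $[0,\infty)$, and $f_2$ is the escape function from Lemma~\ref{sinkfun} with $|\xi|^{1-k}H_pf_2\ge Cf_2$ near the sink. Then
\begin{equation}
H_p\bigl(\rho_2(hf_2)\bigr)=h\,\rho_2'(hf_2)\,H_pf_2\le 0
\end{equation}
\emph{everywhere}, a sign that a dyadic shell cutoff simply does not have. The ``boundary'' contribution, supported where $\rho_2'\ne 0$ (i.e.\ $f_2\sim h^{-1}$), produces $\|E_hYAu\|_{L^2}^2$ with a good sign, and the $B^{s_+}_{2,\infty}$ norm appears on the left by taking $\sup_{h>0}$, while the $B^{s_+}_{2,1}$ norms on the right come in through the duality pairing $|\langle G_hPu,G_hu\rangle|\le\|B_1Pu\|_{B^{s_+-k+1}_{2,1}}\|Au\|_{B^{s_+}_{2,\infty}}$. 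Your flow-averaged construction of the cutoff near $\Lambda^+$ is in the same spirit as Lemma~\ref{sinkfun} and is fine, but that is not where the difficulty lies; the difficulty is the radial localizer. Note also that one cannot simply telescope: even though the low-pass cutoffs $\rho_2(hf_2)$ have the good sign, their dyadic differences do not. Finally, you omit the regularization step (using $X_\tau,Y_\tau$ as in Lemma~\ref{xylemm}) required to pass from the a priori estimate, where $Au\in B^{s_+}_{2,\infty}$ is assumed, to the actual conclusion that $Au\in B^{s_+}_{2,\infty}$; this step is not optional here because the regularity of $Au$ is part of what is being proved.
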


Besides radial points, we can also consider the principal type of propagation. More precisely, we have

\begin{theo}
\label{theo3}
Assume $P\in \Psi^{k}(M)$ is a properly supported operator with $\sigma(P)=p-iq$, $p,q\in S^k(T^*M;\RR)$. Let $A, B, B_1 \in \Psi^0(M)$ be compactly supported and $\langle \xi \rangle^{-k}q\geq 0$ on $\WF(B_1)$. If the following control condition holds:
for any $(x,\xi)\in \WF(A)$, there exists $T>0$ such that
\begin{equation}
\varphi_{-T}(x,\xi)\in \Ell(B), \quad \text{and} \quad \varphi_t(x,\xi)\in \Ell(B_1),\forall t\in [-T,0].
\end{equation}
Then there exists $\chi\in C^{\infty}_c(M)$ such that for any $s, N\in \RR$, if $u\in \mathscr{D}^{\prime}(M)$, $Bu\in B^s_{2,1}$, $B_1Pu\in B_{2,1}^{s-k+1}$, then $Au\in B_{2,1}^s$ and
\begin{equation}
\label{gbprop}
\|A u\|_{B^{s}_{2,1}}\leq C\|B u\|_{B^{s}_{2,1}}+C\|B_1 P u\|_{B^{s-k+1}_{2,1}}+C\|\chi u\|_{H^{-N}}
\end{equation}
Similarly, if $u\in \mathscr{D}^{\prime}(M)$, $Bu\in B_{2,\infty}^s$, $B_1Pu\in B_{2,\infty}^{s-k+1}$, then $Au\in B_{2,\infty}^s$ and
\begin{equation}
\label{gdbprop}
\|A u\|_{B^{s}_{2,\infty}}\leq C\|B u\|_{B^s_{2,\infty}}+C\|B_1 P u\|_{B^{s-k+1}_{2,\infty}}+C\|\chi u\|_{H^{-N}}.
\end{equation}
\end{theo}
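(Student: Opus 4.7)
The plan is to reduce the theorem to the classical Sobolev version of propagation of singularities by means of a Littlewood--Paley decomposition, followed by a microlocal iteration that absorbs a lower-order error. I would first fix a smooth dyadic partition of unity $1=\sum_{j\geq 0}\psi_j(\xi)$ in frequency with $\psi_j$ supported in $\{|\xi|\sim 2^j\}$ for $j\geq 1$, and set $\Psi_j=\Op(\psi_j)$. By the standard characterization of Besov spaces,
\[
\|u\|_{B^s_{2,1}}\sim\sum_{j\geq 0}2^{js}\|\Psi_j u\|_{L^2},\qquad \|u\|_{B^s_{2,\infty}}\sim\sup_{j\geq 0}2^{js}\|\Psi_j u\|_{L^2},
\]
and $\Psi_j u$ is essentially frequency-localized at $|\xi|\sim 2^j$, so $\|\Psi_j u\|_{H^t}\sim 2^{jt}\|\Psi_j u\|_{L^2}$.

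Next I would apply the Sobolev version of propagation of singularities, e.g.\ \cite[Theorem E.47]{res}, to $\Psi_j u$ at regularity level $s$. Commuting $\Psi_j$ past $A,B,\chi\in\Psi^0$ and past $B_1P\in\Psi^k$ produces commutator terms that are frequency-localized near $2^j$ and gain one order of decay because $\partial_\xi\psi_j=O(2^{-j})$ on $\{|\xi|\sim 2^j\}$. This should yield, with $C$ independent of $j$,
\[
2^{js}\|\Psi_j Au\|_{L^2}\leq C\bigl(2^{js}\|\Psi_j Bu\|_{L^2}+2^{j(s-k+1)}\|\Psi_j B_1 Pu\|_{L^2}+2^{j(s-1)}\|\widetilde\Psi_j u\|_{L^2}+2^{-jN'}\|\chi u\|_{H^{-N}}\bigr),
\]
where $\widetilde\Psi_j$ is a slight enlargement of $\Psi_j$. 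Summing in $j$ for \eqref{gbprop} or taking the supremum for \eqref{gdbprop} gives
\[
\|Au\|_{B^s_{2,q}}\leq C\bigl(\|Bu\|_{B^s_{2,q}}+\|B_1 Pu\|_{B^{s-k+1}_{2,q}}+\|\widetilde A u\|_{B^{s-1}_{2,q}}+\|\chi u\|_{H^{-N}}\bigr)
\]
for $\widetilde A\in\Psi^0$ with $\WF(\widetilde A)$ a small enlargement of $\WF(A)$ still contained in $\Ell(B_1)$ and still satisfying the control condition.

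To close the estimate I would iterate the inequality on $\widetilde A u$ with a shrinking sequence of microsupports $A_n$ at regularity levels $s-n$. Since $u\in\mathscr{D}'(M)$ gives $\chi u\in H^{-N_0}$ for some $N_0$, and $H^{-N_0}\hookrightarrow B^{-N_0-1}_{2,q}$, after finitely many iterations the lower-order microlocal Besov term collapses into the $\|\chi u\|_{H^{-N}}$ remainder. A standard mollifier regularization of $u$, together with the uniform bound on its regularized companions, justifies the quantitative passage to the limit. The main obstacle I anticipate is making the commutator estimates $\|[\Psi_j,A]u\|_{L^2}\lesssim 2^{-j}\|\widetilde\Psi_j u\|_{L^2}$ genuinely uniform in $j$ (exploiting that the rescaled symbol $\psi(\xi/2^j)$ has bounded symbol seminorms), and implementing the iteration in the $B^s_{2,\infty}$ case where the absence of $\ell^q$ summability requires that each step of the iteration use a truly strictly shrinking microsupport rather than relying on a dyadic sum.
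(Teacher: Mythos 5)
Your strategy is genuinely different from the paper's. The paper runs a direct Besov-adapted positive-commutator argument: Lemma~\ref{gcons} constructs an escape function $g$ with $|\xi|^{1-k}H_p g\le -\beta g$ off $\Ell(B)$, together with a flow-invariant homogeneous degree-one function $g_1$ with $H_p g_1=0$ on $\supp g$; one then takes $G_h=\Op\left(\langle\xi\rangle^{s+\frac{1-k}{2}}g\,\rho_3(hg_1)\right)$, computes $\Im\langle Pu,G_h^*G_h u\rangle$, applies sharp G\aa rding uniformly in $h$, and finally takes $\sup_h$ (for $B_{2,\infty}$) or sums over a dyadic sequence of $h$ (for $B_{2,1}$). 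The crucial point is that $H_p g_1=0$ makes the shell cutoff $\rho_3(hg_1)$ drop out of the principal commutator, so the $h$-dependence is automatically uniform. You instead treat the Sobolev propagation estimate \cite[Theorem E.47]{res} as a black box, apply it to each Littlewood--Paley block $\Psi_j u$, commute $\Psi_j$ past $A,B,B_1P$, and sum or take the sup in $j$; the $j$-uniformity of the Sobolev constant then encodes the sharp Besov endpoint. Both routes are legitimate, and the tradeoff is transparent: the paper pays for constructing $g_1$ but avoids any iteration, while your argument re-uses a known theorem at the cost of a commutator-plus-iteration bookkeeping.

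Two points in your sketch need tightening. First, $[B_1 P,\Psi_j]$ is microsupported in $\WF(B_1)\cap\{|\xi|\sim 2^j\}$, not near $\WF(A)$, so after summing the lower-order error is $\|B_2 u\|_{B^{s-1}_{2,q}}$ with $\WF(B_2)\subset\WF(B_1)$; to iterate you should first replace $B_1$ by a $B_1'$ with $\WF(B_1')$ a small conic neighborhood of the flow segment $\{\varphi_t(x,\xi):(x,\xi)\in\WF(A),\ t\in[-T,0]\}$, so that the control condition also holds for the error term. Second, your remark about a ``truly strictly shrinking microsupport'' in the $B_{2,\infty}$ case is backwards: in both $q=1$ and $q=\infty$ the iterated operators $A=A_0,A_1,A_2,\dots$ must have \emph{nested increasing} microsupports (each $\WF(A_n)\subset\Ell(A_{n+1})$, all inside a fixed small enlargement on which the open control condition persists), and the iteration terminates after finitely many steps depending only on $s$ and $N$. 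The $\ell^1$ versus $\ell^\infty$ distinction plays no role in the termination of the iteration; it only affects how you assemble the dyadic pieces at each fixed step.
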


\subsection{Organization of the paper.}
In \S \ref{preliminaries}, we review some basis notions and tools including Besov spaces, regularizing operators and radial sets.  In \S \ref{ssourpf}, we prove the sharp source estimates, Theorem \ref{theo1}. In \S \ref{ssinkpf}, we prove the sharp sink estimates, Theorem \ref{theo2}. In \S \ref{pspf}, we prove the propagation of singularities in Besov spaces.

\medskip\noindent\textbf{Acknowledgements.}
I would like to thank Maciej Zworski for suggesting this problem and for helpful discussion. I would like to thank Peter Hintz for sending me an unpublished note, in which he discussed the sharp radial estimates heuristically. I would also like to thank Thibault de Poyferr\'e for his interest in this note and helpful discussion. Partial support by the National Science Foundation grant DMS-1500852 is also gratefully acknowledged.

\section{Preliminaries}

\label{preliminaries}

\subsection{Besov spaces}
Let $M$ be a smooth manifold. We fix a metric $|\cdot|$ on the cotangent bundle $T^*M$. Let $\alpha, \alpha_0\in C^{\infty}(T^*M)$ such that $\alpha_0(x,\xi)=1$ when $|\xi|\leq 1$, $\alpha_0(x,\xi)=0$ when $|\xi|>2$, $\alpha(x,\xi)=1$ when $C_1\leq |\xi|\leq C_2$ for some $C_1, C_2>0$ and $\alpha=0$ when $|\xi|\leq C_1/2$ or $|\xi|>2C_2$, and
\begin{equation}
    0<C_1\leq \alpha_0(x,\xi)+\sum_{j=0}^{\infty}\alpha(x,h_0^{j}\xi)\leq C_2<\infty
\end{equation}
with $0<h_0<1$. A function $u\in \mathscr{D}^{\prime}(M)$ is in the Besov space $B_{2,1}$ if the quantity
\begin{equation}
    \|u\|_{B^s_{2,1}}:= \|\alpha_0(x,D)u\|_{L^2}+ \sum_{j=0}^{\infty}h_0^{-sj}\|\alpha(x,h_0^{j}D)u\|_{L^2(M)}
\end{equation}
is finite. $\|\cdot\|_{B^s_{2,1}}$ is the $B_{2,1}^{s}$ norm of $u$. We can also define the space $B_{2,\infty}^s$ by putting
\begin{equation}
    \|u\|_{B_{2,\infty}^s}:=\sup_{j\geq 0}\{ \|\alpha_0(x,D)u\|_{L^2}, h_0^{-sj}\|\alpha(x,h_0^{j}D)u\|_{L^2} \}.
\end{equation}
A function $u$ is in $B_{2,\infty}^s$ if and only if the norm $\|u\|_{B_{2,\infty}^s}$ is finite.
The space $B_{2,\infty}^s$ is the dual space of $B_{2,1}^{-s}$. 

We record a mapping property of pseudodifferential operators between Besov spaces.

\begin{lemm}
Suppose $P\in \Psi^k(M)$, $k\in \RR$. Then 
\begin{equation}
    P: B_{2,\infty}^s(M) \to B_{2,\infty}^{s-k}(M), \quad P: B_{2,1}^s(M) \to B_{2,1}^{s-k}(M)
\end{equation}
are bounded with norms depend on the seminorms of the principal symbol of $P$ in $S^{k}_{1,0}(T^*M)$.
\end{lemm}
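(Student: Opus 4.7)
The plan is to work directly with the Littlewood--Paley definition of the Besov norms, exploiting the near-diagonal action of pseudodifferential operators on the dyadic frequency decomposition. Write $\alpha_j(x,\xi) := \alpha(x, h_0^j \xi)$ for $j \geq 0$ and $\alpha_{-1} := \alpha_0$, and fix slightly fattened cutoffs $\tilde{\alpha}_j$ supported in the same dyadic shell $\{|\xi| \sim h_0^{-j}\}$ with $\tilde{\alpha}_j \equiv 1$ on $\supp \alpha_j$.

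The key step is a uniform shell estimate: for every $j \geq -1$ and every $N \in \NN$,
\begin{equation*}
\|\alpha_j(x,D) P u\|_{L^2} \leq C h_0^{-jk} \|\tilde{\alpha}_j(x,D) u\|_{L^2} + C_N h_0^{jN} \|u\|_{H^{-N}},
\end{equation*}
with $C$ and $C_N$ depending only on finitely many seminorms of $\sigma(P)$ in $S^k_{1,0}(T^*M)$. To prove it, I would split $\alpha_j(x,D) P = \alpha_j(x,D) P \tilde{\alpha}_j(x,D) + \alpha_j(x,D) P (I - \tilde{\alpha}_j(x,D))$. The second piece is essentially smoothing: the composition formula produces a full symbol whose asymptotic terms each contain a factor of the form $\partial^\gamma \alpha_j \cdot \partial^\delta (1-\tilde{\alpha}_j)$ with disjoint supports, so the symbol lies in $S^{-\infty}$ and the scaling check gives the $h_0^{jN}$ gain. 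For the main piece, the symbol of $\alpha_j(x,D) P \tilde{\alpha}_j(x,D)$ is supported in $\{|\xi| \sim h_0^{-j}\}$, where $|\partial_x^\beta \partial_\xi^\gamma \sigma(P)| \lesssim h_0^{-j(k-|\gamma|)}$. Rescaling $\xi \mapsto h_0^{-j}\xi$ converts $h_0^{jk}$ times this symbol into one lying in $S^0_{1,0}$ with uniform-in-$j$ seminorms controlled by those of $\sigma(P)$, and Calder\'on--Vaillancourt then furnishes the $L^2 \to L^2$ bound $C h_0^{-jk}$ uniformly in $j$.

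Given the shell estimate, the rest is routine assembly. For the $B_{2,1}^{s-k}$ bound, multiply by $h_0^{-(s-k)j}$ and sum over $j$:
\begin{equation*}
\|Pu\|_{B^{s-k}_{2,1}} \leq C \sum_{j \geq -1} h_0^{-sj} \|\tilde{\alpha}_j(x,D) u\|_{L^2} + C_N \|u\|_{H^{-N}} \sum_{j \geq -1} h_0^{(N-s+k)j}.
\end{equation*}
Choosing $N$ large makes the remainder sum finite, and $\|u\|_{H^{-N}}$ is absorbed by $\|u\|_{B^s_{2,1}}$ for $N$ large (since $H^{-N} \supset B^s_{2,1}$ for such $N$); the first sum is equivalent to $\|u\|_{B^s_{2,1}}$ because the Besov norm is independent up to equivalence of the choice of dyadic partition of unity (itself a short Littlewood--Paley argument of the same flavor). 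The $B_{2,\infty}^{s-k}$ case is identical with $\sup_j$ in place of $\sum_j$.

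The main technical obstacle is the uniformity in $j$ in the shell estimate, namely verifying that the $S^k_{1,0}$ seminorms of $\sigma(P)$ dominate the rescaled $S^0$ seminorms of $\sigma(\alpha_j(x,D) P \tilde{\alpha}_j(x,D))$ with a single constant, so that one application of Calder\'on--Vaillancourt suffices for all $j$. Once this uniformity is pinned down, the argument reduces to standard Littlewood--Paley book-keeping.
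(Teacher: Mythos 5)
Your argument is correct and shares the Littlewood--Paley shell strategy of the paper, but the two decompositions of $\alpha_j(x,D)P$, where $\alpha_j(x,\xi):=\alpha(x,h_0^j\xi)$, are genuinely different. The paper first reduces to $k=0$ and then uses the commutator split
$\alpha_j(x,D)P = P\,\alpha_j(x,D) + [\alpha_j(x,D),P]\,\widetilde{\alpha}_j(x,D)$
(up to an off-shell remainder with rapid decay in $j$); the first piece is handled by the $L^2\to L^2$ continuity of the order-zero operator $P$, and for the second the explicit computation $\sigma\bigl([P,\alpha(x,hD)]\bigr)=h\,(H_p\alpha)(x,h\xi)$ combined with $h\langle\xi\rangle\sim 1$ on the shell shows that $[\alpha_j(x,D),P]$ is bounded $H^{-1}\to L^2$ uniformly in $j$, which supplies the one-power gain. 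You instead keep general $k$, insert the fattened cutoff on the other side, $\alpha_j(x,D)P = \alpha_j(x,D)P\,\widetilde{\alpha}_j(x,D) + \alpha_j(x,D)P\,(1-\widetilde{\alpha}_j(x,D))$, dispose of the second term by disjointness of supports, and estimate the main piece via the dilation $\xi\mapsto h_0^{-j}\xi$ together with Calder\'on--Vaillancourt applied uniformly in $j$. The commutator route is slightly more economical (one principal-symbol computation, no appeal to the full composition calculus nor to a per-shell Calder\'on--Vaillancourt), while your shell rescaling is more direct about where the factor $h_0^{-jk}$ comes from and avoids the reduction to order zero. Both write-ups leave to the reader the same routine points --- the $O(h_0^{jN})$ off-shell remainders and the equivalence of Besov norms built from slightly different dyadic families --- which would be worth recording once if the lemma is meant to stand alone.
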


\begin{proof}
Without loss of generality, we assume $k=0$. Suppose $h_0\in (0,1)$, $\alpha\in C^{\infty}(T^*M)$, $\alpha(x,\xi)=1$ when $C_1\leq |\xi|\leq C_2$ and $\alpha(x,\xi)=0$ when $|\xi|\leq C_1/2$ or $|\xi|\geq 2C_2$ with $C_1, C_2>0$. Let $\widetilde{\alpha}\in C^{\infty}(T^*M)$ such that $\widetilde{\alpha}(x,\xi)=1$ when $C_1/2\leq |\xi|\leq 2C_2$ and $\widetilde{\alpha}(x,\xi)=0$ when $|\xi|\leq C_1/4$ or $|\xi|\geq 4C_2$. Then
\begin{equation}
\label{mapping}
\begin{split}
    \|\alpha(x,h_0^jD)Pu\|_{L^2}
    = & \| P \alpha(x,h_0^jD)u \|_{L^2}+\|[\alpha(x,h_0^jD),P]\widetilde{\alpha}(x,h_0^jD)u\|_{L^2}\\
    \leq & \|P\|_{L^2\to L^2}\|\alpha(x,h_0^jD)u\|_{L^2}\\
    & +\|[\alpha(x,h_0^jD),P]\|_{H^{-1}\to L^2}\|\widetilde{\alpha}(x,h_0^jD)u\|_{H^{-1}}.
\end{split}
\end{equation}
For any $h>0$, we have in local coordinates that
\begin{equation}
    \sigma([P, \alpha(x,hD)])=H_p\left( \alpha(x,h\xi)\right)=h(H_p\alpha)(x,h\xi).
\end{equation}
Therefore
\begin{equation}\begin{split}
    & \langle \xi \rangle^{|\beta|+1}\left| \partial_{x}^{\alpha}\partial_{\xi}^{\beta}\sigma\left([ P, \alpha(x,hD) ]\right) \right|\\
    = & h^{|\beta|+1}\langle \xi \rangle^{|\beta|+1}\left| \left(\partial_x^{\alpha}\partial_{\xi}^{\beta}H_p\alpha\right)(x,h\xi) \right|\\
    \leq & C\left| \left(\partial_x^{\alpha}\partial_{\xi}^{\beta}H_p\alpha\right)(x,h\xi) \right|\leq C\sup_{T^*M}\left| \left(\partial_x^{\alpha}\partial_{\xi}^{\beta}\{p,\alpha\}\right)\right|
\end{split}\end{equation}
This shows that 
\begin{equation}
    \|[\alpha(x,h_0^jD), P]\|_{H^{-1}\to L^2}
\end{equation}
is bounded uniformly by seminorms of $P$ since $\alpha=0$ when $|\xi|\leq C_1/2$ or $|\xi|>2C_2$. Now \eqref{mapping} shows that 
\begin{equation}
    \|Pu\|_{B_{2,\infty}^{s}}\leq C\|u\|_{B_{2,\infty}^s}, \quad \|Pu\|_{B_{2,1}^s}\leq C\|u\|_{B_{2,1}^s},
\end{equation}
with $C>0$ depends on the seminorms of $\sigma(P)$.
\end{proof}

\subsection{Regularizing operators.}
In order to regularize distributions, we define for any $0<\tau\leq 1$, $r\in \RR$,
\begin{equation}
\label{xtaudefi}
    X_{\tau}:=\Op\left(\langle \tau \xi \rangle^{-m}\right)\in \Psi^{-m}(M).
\end{equation}
We record some useful properties of $X_{\tau}$:
\begin{lemm}
\label{xylemm}
Let $X_{\tau}$ be as in \eqref{xtaudefi}. Then
\begin{enumerate}
    \item There exists $Y_{\tau}\in \Psi^{m}(M)$ such that
    \begin{equation}
        Y_{\tau}=\Op(y_{\tau}), \quad y_{\tau}(x,\xi)=y(x,\tau\xi;\tau), \quad y_{\tau}=\langle \xi \rangle^m+O(\tau)_{S^{m-1}_{1,0}},
    \end{equation}
    and
    \begin{equation}
        X_{\tau}Y_{\tau}=I+O(\tau^{\infty})_{\Psi^{-\infty}}, \quad Y_{\tau}X_{\tau}=I+O(\tau^{\infty})_{\Psi^{-\infty}}.
    \end{equation}
    \item For any $P\in \Psi^{k}(M)$, we have
    \begin{equation}
        X_{\tau}PY_{\tau}=P+i\Op\left(\langle \tau\xi \rangle^{m}\{\sigma(P), \langle \tau\xi \rangle^{-m}\}\right)+O(1)_{\Psi^{m-2}_{0}(M)}.
    \end{equation}
    Moreover, we have
    \begin{equation}
        \WF(X_{\tau}PY_{\tau})=\WF(P).
    \end{equation}
    \item If $u\in \mathscr{D}^{\prime}(M)$ and for $s\in \RR$, 
    \begin{equation}
        \|X_{\tau}u\|_{B_{2,\infty}^s}\leq C
    \end{equation}
    for some finite constant $C$ and any $\tau\in(0,1]$, then
    \begin{equation}
        u\in B_{2,\infty}^s \quad \text{and}\quad \|u\|_{B_{2,\infty}^s}\leq C.
    \end{equation}
    The same conclusion holds for $B_{2,1}^{s}$ norms.
\end{enumerate}
\end{lemm}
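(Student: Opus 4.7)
The plan is to handle the three parts in turn: (1) is a $\tau$-uniform parametrix construction, (2) is a composition-formula calculation, and (3) is a weak-compactness argument.

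For (1), $X_\tau$ is elliptic with leading symbol $\langle\tau\xi\rangle^{-m}$ for each $\tau\in(0,1]$. The idea is a parametrix iteration in the $\tau$-adapted calculus: starting from $y_\tau^{(0)}(x,\xi)=\langle\tau\xi\rangle^m$ and composing, the leading term of $X_\tau\Op(y_\tau^{(0)})$ is $I$, and the subprincipal correction carries a factor of $\tau$ because each term in the composition expansion picks up a $\tau$-derivative of the scaled symbols. I would iterate by subtracting corrections of the form $\tau^k r_k(x,\tau\xi)$ at each step and then sum asymptotically via a Borel construction to obtain $y_\tau=y(x,\tau\xi;\tau)$ with $X_\tau Y_\tau=I+O(\tau^\infty)_{\Psi^{-\infty}}$; the other-sided identity follows by symmetry or by uniqueness of the parametrix modulo smoothing.

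For (2), apply the standard composition formula. The leading symbol of $X_\tau PY_\tau$ is $\langle\tau\xi\rangle^{-m}\sigma(P)\,y_\tau$, which equals $\sigma(P)$ up to $O(\tau)$. The subprincipal terms, computed via Poisson brackets, collapse using the Leibniz identity $\{\sigma(P),\langle\tau\xi\rangle^{-m}\langle\tau\xi\rangle^m\}=0$ and the fact that $\langle\tau\xi\rangle^{-m}$ has no $x$-dependence; the sign-tracked combination yields exactly $i\langle\tau\xi\rangle^m\{\sigma(P),\langle\tau\xi\rangle^{-m}\}$. Higher-order terms in the expansion involve additional symbolic derivatives, so they live in $S^{m-2}_{1,0}$ uniformly in $\tau$, giving the $O(1)_{\Psi_0^{m-2}}$ remainder. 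The identity $\WF(X_\tau PY_\tau)=\WF(P)$ is immediate from ellipticity of $X_\tau$ and $Y_\tau$ on $T^*M\setminus 0$ for each fixed $\tau\in(0,1]$.

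For (3), observe that $X_\tau u\to u$ in $\mathscr{D}'(M)$ as $\tau\to 0^+$, since $\sigma(X_\tau)=\langle\tau\xi\rangle^{-m}\to 1$ pointwise and the family is uniformly bounded. For each dyadic index $j\geq 0$, the hypothesis gives $\|\alpha(x,h_0^jD)X_\tau u\|_{L^2}\leq Ch_0^{sj}$ uniformly in $\tau$; passing to a weakly convergent subsequence in $L^2$ whose weak limit is forced to be $\alpha(x,h_0^jD)u$, lower semi-continuity of the $L^2$ norm yields $\|\alpha(x,h_0^jD)u\|_{L^2}\leq Ch_0^{sj}$. Taking supremum over $j$ proves the $B^s_{2,\infty}$ statement; the $B^s_{2,1}$ case follows by the same pointwise estimate combined with Fatou's lemma on the series $\sum_j h_0^{-sj}\|\alpha(x,h_0^jD)\,\cdot\,\|_{L^2}$. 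The most delicate step overall is the bookkeeping for (1): the scaled form $y_\tau(x,\xi)=y(x,\tau\xi;\tau)$ must be preserved at every iteration and the $\tau$-gains must accumulate to yield $O(\tau^\infty)$ rather than merely $O(\tau)$, which forces one to work with a two-scale symbol calculus adapted to both the $\xi$-size and the $\tau$-size and to check that composition and Borel summation respect this structure.
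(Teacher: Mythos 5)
The paper does not in fact supply a proof of Lemma~\ref{xylemm}; the construction is treated as standard (cf.\ the regularization technique in \cite[\S E.4]{res}), so your sketch has to be judged on its own merits. Parts (1) and (3) of your proposal are in good shape: the $\tau$-adapted parametrix iteration in (1) and the weak-compactness/lower-semicontinuity argument (with Fatou for $B^s_{2,1}$) in (3) are the right ideas, and you correctly flag that (1) needs a two-scale calculus to propagate the $y_\tau(x,\xi)=y(x,\tau\xi;\tau)$ form through the Borel summation.

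Part (2), however, contains a conceptual gap. You argue the subprincipal terms ``collapse using \dots\ the fact that $\langle\tau\xi\rangle^{-m}$ has no $x$-dependence.'' On a manifold $M$ this is false: $\langle\xi\rangle$ is built from a smooth fiber metric on $T^*M$, which depends on $x$, so $\langle\tau\xi\rangle^{\pm m}$ do have $x$-dependence. In a direct three-way composition $X_\tau\circ P\circ Y_\tau$ one therefore picks up cross-terms of the schematic form $\sigma(P)\,\partial_\xi x_\tau\cdot\partial_x y_\tau$ coming from the fact that the pointwise product $x_\tau y_\tau$ is not $1$ (only $x_\tau\# y_\tau$ is $1+O(\tau^\infty)$). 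These terms need not vanish and sit only one symbolic order below $P$, so you cannot simply throw them into a remainder that is claimed to be two orders down without a separate argument. The fix, which is implicitly the route the paper takes when the lemma is used in the proof of Theorem~\ref{theo1}, is the commutator decomposition
\begin{equation*}
X_\tau P Y_\tau \;=\; P\,X_\tau Y_\tau + [X_\tau,P]\,Y_\tau \;=\; P + [X_\tau,P]\,Y_\tau + O(\tau^\infty)_{\Psi^{-\infty}}.
\end{equation*}
From this one reads off
\begin{equation*}
\sigma\bigl([X_\tau,P]Y_\tau\bigr) \;=\; \tfrac{1}{i}\{x_\tau,\sigma(P)\}\,y_\tau + \cdots \;=\; i\,\langle\tau\xi\rangle^{m}\{\sigma(P),\langle\tau\xi\rangle^{-m}\} + \cdots,
\end{equation*}
with the remainder two symbolic orders below $P$, using the Leibniz identity for $\{\cdot,\cdot\}$ exactly as you intended but without any appeal to $x$-independence. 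I would recommend replacing the direct composition in your part (2) by this identity; it also makes the wavefront set statement $\WF(X_\tau P Y_\tau)=\WF(P)$ transparent. (As an aside, the remainder class $O(1)_{\Psi^{m-2}_0(M)}$ in the statement appears to be a misprint for $O(1)_{\Psi^{k-2}_0(M)}$, which is what is actually used in the proof of Theorem~\ref{theo1}; your estimate should target the latter.)
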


\subsection{Radial sets}
Let $P\in \Psi^k(M)$ be a properly supported pseudodifferential operator with principal symbol $\sigma(P)=p(x,\xi)-iq$, $p,q\in S^k(T^*M;\RR)$, where $S^k(T^*M)$ is the polyhomogeneous symbol class of order $k$ (see \cite[Definition E.2]{res}). To define the radial set with respect to $p$, we consider the radial compactification of $T^*M$. We identify the compactified cotangent bundle $\overline{T}^*M$ with $B^*M:=\{(x,\xi): |\xi|\leq 1\}$ by the diffeomorphism $\kappa: T^*M\rightarrow B^*M$, $(x,\xi)\mapsto (x,\xi/\langle \xi \rangle)$, $\langle \xi \rangle=\sqrt{1+|\xi|^2}$. The boundary of the cotangent bundle, $\partial T^*M$, is then identified with the unit sphere bundle $S^*M$. The rescaled Hamiltonian flow $\langle \xi \rangle^{1-k}H_p$ extends to a smooth vector field on $\overline{T}^*M$ and we use $\varphi_t:=e^{t\langle \xi \rangle^{1-k}H_p}: \overline{T}^*M\rightarrow \overline{T}^*M$ to denote the flow generated by $\langle \xi \rangle^{1-k}H_p$.

\begin{defi}
\label{radialdefi}
Suppose $p\in S^k(T^*M;\RR)$. 
A set $\Lambda^{-}\subset \{(x,\xi): \langle \xi \rangle^{-k} p(x,\xi)=0\}\subset\partial \overline{T}^*M$ is a called a radial source with respect to $p$ if 
\begin{enumerate}
    \item $\Lambda^-$ is invariant under the flow $\varphi_t$;
    \item There exists an open conic neighborhood $U\subset \overline{T}^*M$ of $\Lambda^-$ such that for any $(x,\xi)\in U$, $\varphi_t(x,\xi)\rightarrow \Lambda^-$ as $t\rightarrow -\infty$;
    \item There exist $C,\theta>0$ such that for any $t<0$, $(x,\xi)\in U$, $|\varphi_t(x,\xi)|\geq C e^{t\theta}|\xi|$.
\end{enumerate}
A set $\Lambda^+$ is called a radial sink of $P$ if it is a radial source of $-p$.
\end{defi}

Now we record some useful functions.

\begin{lemm}
\label{sourcefun}
Suppose $k>0$, $p\in S^k(T^*M;\RR)$ is homogeneous of order $k$. Suppose $\Lambda^-$ is the radial source with respect to $p$ as in Definition \ref{radialdefi}. Then there exist $\chi_1, f_1\in C^{\infty}(T^*M\setminus 0;\RR_{\geq 0})$ such that 
\begin{enumerate}
    \item $\chi_1$ is homogeneous of order $0$, $0\leq \chi_1\leq 1$, $\supp\chi_1\subset U$, $\chi_1=1$ on $\Lambda^-$, and $\langle \xi \rangle^{1-k}H_p\chi_1\leq 0$;
    \item $f_1$ is homogeneous of order $1$, $f_1\geq C|\xi|$, and $|\xi|^{1-k}H_pf_1\leq -Cf_1$ on $U$.
\end{enumerate}
\end{lemm}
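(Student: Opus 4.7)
My plan is to exploit the dynamical source structure of $\Lambda^-$ to construct Lyapunov-type functions. I would work in the radial compactification $\overline{T}^*M$, where $\Lambda^-$ is a compact subset of the boundary sphere bundle $S^*M$, invariant under the smooth vector field $V := \langle\xi\rangle^{1-k} H_p$. The source property (Definition \ref{radialdefi}) is to be interpreted as uniform exponential expansion of forward orbits away from $\Lambda^-$: both in the ``radial'' direction (contraction of $|\xi|$ under forward flow, i.e.\ $|\varphi_t(x,\xi)| \gtrsim e^{-t\theta}|\xi|$ for $t \leq 0$) and within $S^*M$ transversally to $\Lambda^-$.

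For $f_1$ I would simply take $f_1(x,\xi) := |\xi|$ on $T^*M \setminus 0$, which is homogeneous of degree $1$ and satisfies $f_1 \geq |\xi|$. Writing $|\xi|^{1-k} H_p f_1 = |\xi| \cdot (|\xi|^{-k} H_p |\xi|)$ reduces the desired estimate to $|\xi|^{-k} H_p |\xi| \leq -C$, a degree-$0$ inequality that it suffices to verify at fiber infinity. Differentiating the source estimate $|\varphi_t(x,\xi)| \geq c\, e^{-t\theta}|\xi|$ for $t \leq 0$ at $t = 0^-$ yields $V\log|\xi| \leq -\theta$ on $\Lambda^-$ (using $\langle\xi\rangle \sim |\xi|$ at infinity); continuity and shrinking $U$ extends this to the required conic neighborhood.

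For $\chi_1$ I would build a Lyapunov function $\phi \in C^\infty(\overline{T}^*M;\RR_{\geq 0})$, homogeneous of degree $0$, vanishing precisely on $\Lambda^-$, and satisfying $V\phi \geq 0$ near $\Lambda^-$, and then set $\chi_1 := \chi(\phi)$ for a cutoff $\chi \in C^\infty(\RR)$ with $\chi(0) = 1$, $0 \leq \chi \leq 1$, $\chi' \leq 0$, and $\supp\chi \subset [0,\varepsilon)$. To build $\phi$, I would start with any smooth non-negative function $d$ on $\overline{T}^*M$ with $\{d = 0\} = \Lambda^-$ locally (for example, built from a Riemannian distance function on $S^*M$) and average along the flow,
\begin{equation*}
    \phi(x,\xi) := \int_0^T d(\varphi_s(x,\xi)) \, ds,
\end{equation*}
so that $V\phi = d\circ\varphi_T - d$, which is non-negative for $T$ large enough and $(x,\xi)$ sufficiently near $\Lambda^-$ by the uniform exponential expansion. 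The desired properties of $\chi_1$ ($\chi_1 = 1$ on $\Lambda^-$, $\supp\chi_1 \subset U$, $V\chi_1 = \chi'(\phi)V\phi \leq 0$) are then immediate after homogeneous extension of degree $0$.

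The hard part will be verifying the monotonicity $V\phi \geq 0$ in a full open neighborhood of $\Lambda^-$: the primitive distance $d$ need not be monotone infinitesimally along $V$, but the averaging trick converts the exponential expansion into a discrete-time monotonicity comparison between times $0$ and $T$. Ensuring that $T$ can be chosen uniformly over a neighborhood, and that forward orbits remain in the region where $d$ is well-behaved, uses the compactness of $\Lambda^-$ together with the uniform expansion rate. The remaining checks — smoothness on $T^*M \setminus 0$, homogeneity, and sign conditions — are routine.
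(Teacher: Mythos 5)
Your high-level strategy (Lyapunov functions built by flow averaging) is the same as the paper's, but both of your concrete constructions try to extract \emph{pointwise infinitesimal} decay from the definition of a radial source, which only provides \emph{time-integrated} dynamical information. Each step that asserts such a pointwise bound is a genuine gap.

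For $f_1$: you set $f_1 = |\xi|$ and claim $|\xi|^{-k}H_p|\xi| \le -C$ by differentiating the source estimate at $t=0^-$. That differentiation is only legitimate if $c=1$ in $|\varphi_t(x,\xi)|\ge c\,e^{-t\theta}|\xi|$, since otherwise the inequality is strict at $t=0$ and the comparison of one-sided difference quotients gives nothing. Nothing in Definition~\ref{radialdefi} forces $c=1$; the definition only yields an averaged fact, namely that there is $T_1>0$ with $|e^{-T_1 V_1}(x,\xi)|\ge 2|\xi|$ on $U$ (here $V_1=|\xi|^{1-k}H_p$). Indeed $V_1|\xi|$ may well be positive on part of $U$. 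The paper repairs this by averaging: $f_1(x,\xi):=\int_0^{T_1}|e^{-tV_1}(x,\xi)|\,dt$, which is homogeneous of degree $1$, bounded below by $C|\xi|$, and satisfies the fundamental-theorem-of-calculus identity $V_1 f_1 = |\xi|-|e^{-T_1 V_1}(x,\xi)|\le -|\xi|\le -Cf_1$ on $U$.

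For $\chi_1$: your $\phi=\int_0^T d\circ\varphi_s\,ds$ gives $V\phi = d\circ\varphi_T - d$, and the non-negativity of this difference near $\Lambda^-$ requires that the sublevel set $\{d\le\varepsilon\}$ be backward-invariant for time $T$ uniformly as $\varepsilon\to 0$. Definition~\ref{radialdefi} provides no transversal expansion rate in $S^*M$ (condition~(3) constrains only the fiber variable $|\xi|$), so this monotonicity of a Riemannian distance is not available and the step where you invoke ``uniform exponential expansion'' has no footing. The paper avoids any distance comparison by averaging a plateau cutoff $\vartheta_0$ (equal to $1$ near $\Lambda^-$, supported in $\iota(U)$) over the shifted window $[T,2T]$. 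Then $V\vartheta = T^{-1}\bigl(\vartheta_0\circ\varphi_{2T}-\vartheta_0\circ\varphi_T\bigr)$, and the needed sign is a discrete nesting property: because $\supp\vartheta_0$ is compact and all of its backward orbits reach $\{\vartheta_0=1\}$ within time $T$, whenever $\vartheta_0\circ\varphi_{2T}>0$ one has $\vartheta_0\circ\varphi_T=1$, hence $V\vartheta\le 0$. This uses only the convergence in Definition~\ref{radialdefi}~(2) plus compactness — exactly what the definition gives — rather than any quantitative transversal rate. If you replace your $f_1=|\xi|$ by the flow-averaged $\int_0^{T_1}|e^{-tV_1}|\,dt$ and your distance-based $\phi$ by a $[T,2T]$-averaged plateau cutoff, your outline becomes the paper's proof.
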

The proof presented here is a modification of the proof of \cite[Lemma C.1]{zeta}.
\begin{proof}
We first construct $\chi_1$. Let $\iota : T^*M\setminus 0 \to S^*M=\partial \overline{T}^*M, (x,\xi)\mapsto (x,\xi/|\xi|)$ be the natural projection. Since $|\xi|^{1-k}H_p$ is homogeneous of order $0$, we know $V:=\iota_*(|\xi|^{1-k}H_p)$ is a well-defined vector field on $S^*M$. We only need to construct $\vartheta\in C^{\infty}(S^*M)$ such that $0\leq \vartheta \leq 1$, $\supp \vartheta\subset \iota(U)$, $\vartheta=1$ on $\Lambda^-$ and $V\vartheta\leq 0$, and then put $\chi_1:=\iota^*(\vartheta)$. 

Let $\vartheta_0\in C^{\infty}(S^*M;[0,1])$ such that $\supp \vartheta_0\subset \iota(U)$, $\vartheta_0=1$ near $\Lambda^-$. By the definition of the radial source, there exists $T>0$ such that if $\vartheta_0(x,\xi)\neq 0$, then $\vartheta_0\left(e^{-tV}(x,\xi)\right)=1$ for any $t>T$. Now we put
\begin{equation}
    \vartheta(x,\xi):=T^{-1}\int_{T}^{2T}\vartheta_0(e^{tV}(x,\xi))dt.
\end{equation}
If $\vartheta(x,\xi)\neq 0$, then there exists $t_0\geq T$ such that $\vartheta_0(e^{t_0V}(x,\xi))\neq 0$, by the assumption on $T$, we find $\vartheta_0(x,\xi)=1$, which implies $(x,\xi)\in \iota(U)$. Hence $\supp \vartheta\subset \iota(U)$. Since $\Lambda^-$ is invariant under $e^{tV}$, we know $\vartheta|_{\Lambda^-}=1$. Note that
\begin{equation}
V\vartheta(x,\xi)=\tfrac{1}{T}\left( \vartheta_0\left(e^{2T}(x,\xi)\right)-\vartheta_0\left(e^{T}(x,\xi)\right) \right).
\end{equation}
Hence if $V\vartheta(x,\xi)>0$, then
\begin{equation}
    0\leq \vartheta_0\left( e^{T}(x,\xi) \right)<\vartheta_0\left( e^{2T}(x,\xi) \right)\leq 1.
\end{equation}
This is impossible since $\vartheta_0\left( e^{2T}(x,\xi) \right)\neq 0$ implies $\vartheta_0\left( e^{T}(x,\xi) \right)=1$.

We now construct $f_1$. For that we fix a smmooth metric $|\cdot|$ on $T^*M$. Let $V_1:=|\xi|^{1-k}H_p$. By the definition of the radial source, there exists $T_1>0$ such that for any $(x,\xi)\in U$, $t\geq T_1$, we have $|e^{-tV_1}(x,\xi)|\geq 2|\xi|$. We now define
\begin{equation}
    f_1(x,\xi):=\int_0^{T_1} \left| e^{-t H_p}(x,\xi) \right| dt.
\end{equation}
Since $V$ is homogeneous of order $0$, $|\xi|$ is homogeneous of order $1$, we know $f_1$ is homogeneous of order $1$. Note that $f_1>0$, thus by the homogeneity, $f_1(x,\xi)\geq C|\xi|$ for some $C>0$ and any $(x,\xi)\in T^*M$. Finally for any $(x,\xi)\in U$, we have
\begin{equation}
    V_1f_1(x,\xi)=|\xi|-\left| e^{-T_1H_p}(x,\xi) \right|\leq -|\xi|\leq -Cf_1.
\end{equation}
This concludes the proof.
\end{proof}
We can construct similar functions for the radial sink:
\begin{lemm}
\label{sinkfun}
Suppose $k>0$, $p\in S^{k}(T^*M;\RR)$ is homogeneous of order $k$. Suppose $\Lambda^+$ is the radial sink with respect to $p$ as in Definition \ref{radialdefi}. Then there exist $\chi_2, f_2\in C^{\infty}(T^*M\setminus 0; \RR_{\geq 0})$ such that
\begin{enumerate}
    \item $\chi_2$ is homogeneous of order $0$, $0\leq \chi_2\leq 1$, $\supp \chi_2\subset U$, $\chi_2=1$ on $\Lambda^+$, and $\langle \xi \rangle^{1-k}H_p\chi_2\geq 0$;
    \item $f_2$ is homogeneous of order $1$, $f_2\geq C|\xi|$, and $|\xi|^{1-k}H_pf_2\geq Cf_2$ on $U$.
\end{enumerate}
\end{lemm}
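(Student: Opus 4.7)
The plan is to reduce Lemma \ref{sinkfun} to Lemma \ref{sourcefun} by a symmetry argument, exploiting the fact that by Definition \ref{radialdefi}, a radial sink with respect to $p$ is by definition a radial source with respect to $-p$. In particular, the open conic neighborhood $U$ of $\Lambda^+$ supplied by the sink hypothesis for $p$ serves as the source neighborhood required by Lemma \ref{sourcefun} applied to $-p$: invariance of $\Lambda^+$ under the flow is $\pm$-symmetric, the condition that orbits converge to $\Lambda^+$ as $t\to +\infty$ on $U$ under the flow of $H_p$ is the same as convergence as $t\to -\infty$ under the flow of $H_{-p}$, and the exponential escape bound translates identically.

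First I would apply Lemma \ref{sourcefun} to $\tilde p := -p \in S^k(T^*M;\RR)$, which is still homogeneous of order $k$, using the same neighborhood $U$. This produces smooth nonnegative functions $\chi_1, f_1 \in C^{\infty}(T^*M\setminus 0; \RR_{\geq 0})$ such that $\chi_1$ is homogeneous of order $0$, bounded by $1$, supported in $U$, equal to $1$ on $\Lambda^+$, with $\langle \xi \rangle^{1-k}H_{\tilde p}\chi_1 \leq 0$; and $f_1$ is homogeneous of order $1$, satisfies $f_1 \geq C|\xi|$, and obeys $|\xi|^{1-k}H_{\tilde p}f_1 \leq -Cf_1$ on $U$.

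Next I would simply set $\chi_2 := \chi_1$ and $f_2 := f_1$. Homogeneity, support, nonnegativity, the bound $\chi_2 \leq 1$, the equality $\chi_2 = 1$ on $\Lambda^+$, and the lower bound $f_2 \geq C|\xi|$ all carry over verbatim. Since $H_{\tilde p} = -H_p$, the two differential inequalities flip sign, yielding $\langle \xi \rangle^{1-k}H_p\chi_2 \geq 0$ and $|\xi|^{1-k}H_pf_2 \geq Cf_2$ on $U$, which are exactly the conclusions of Lemma \ref{sinkfun}.

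There is no real obstacle here — the proof is a direct consequence of the $\pm$-symmetry already built into Definition \ref{radialdefi} together with Lemma \ref{sourcefun}. If one preferred a fully self-contained argument, one could instead mimic the proof of Lemma \ref{sourcefun} with all time directions reversed: average the cutoff $\vartheta_0$ along the flow over $[-2T,-T]$ rather than $[T,2T]$, and replace $e^{-tH_p}$ by $e^{tH_p}$ in the definition of $f_2$. The sink hypothesis then provides the growth $|e^{tH_p}(x,\xi)| \geq 2|\xi|$ for $t \geq T_1$ on $U$, which supplies the required sign in the final inequality.
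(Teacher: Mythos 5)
Your reduction is correct and is exactly the argument the paper tacitly intends: Lemma \ref{sinkfun} is stated without proof immediately after Lemma \ref{sourcefun}, precisely because Definition \ref{radialdefi} declares a radial sink of $p$ to be a radial source of $-p$, so applying Lemma \ref{sourcefun} to $\tilde p=-p$ and using $H_{-p}=-H_p$ to flip the two inequalities gives the result verbatim. Your remark that one could alternatively rerun the source construction with time reversed is also accurate, but the symmetry reduction is the cleaner route and matches the paper's implicit intent.
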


\section{Proof of sharp source estimates}

\label{ssourpf}

We now prove Theorem \ref{theo1}, the sharp source estimates. The proof here is a modification of the proof of \cite[Theorem E.52]{res}.

\begin{proof}[Proof of Theorem \ref{theo1}]
\textbf{Step 1. A priori estimates.}
We first prove that if $P$, $s_-$, $A$, $B_1$ satisfy conditions in Theorem \ref{theo1}, then for any $u\in \mathscr{D}^{\prime}(M)$, if $Au\in B_{2,\infty}^{s_-}$, $B_1Pu\in B_{2,1}^{s_--k+1}$, then 
\begin{equation}
\label{apriorisource}
    \|Au\|_{B_{2,\infty}^{s_-}}\leq C\|B_1 P u\|_{B_{2,1}^{s_--k+1}}+C\|\chi u\|_{H^{-N}}.
\end{equation}

We write $P=P_0+iQ$ with $P_0=\Im{P}$, $Q=\Im{P}$. Let $\chi_1$, $f_1$ be as in Lemma \ref{sourcefun}. Let $\rho_1\in C^{\infty}_c((1/4,\infty))$ such that $\rho_1=0$ on $(0,1/2)$, $\supp\rho_1^{\prime}\subset [1/2,4]$, $\rho_1^{\prime}=1$ on $[1,2]$. By \eqref{sourcecri} and \cite[Proposition E.51]{res}, there exists $b\in S^0(T^*M;\RR)$ such that
\begin{equation}
    \langle \xi \rangle^{1-k}\left( \sigma_{k-1}(Q)+\left(s_-+\tfrac{1-k}{2}\right)\tfrac{H_p\langle \xi \rangle}{\langle \xi \rangle}+H_pb \right)\leq 0
\end{equation}
near $\Lambda^-$. For $h>0$, we put
\begin{equation}\begin{split}
    g_{h}:=\langle \xi \rangle^{s_-+\frac{1-k}{2}}\chi_1\rho_1(h f_1)e^b, \quad 
    G_{h}:=\Op(g_{h})\in \Psi^{s_-+\frac{1-k}{2}}(M).
\end{split}\end{equation} 
Now we have 
\begin{equation}
    G^*_hG_h u\in B_{2,\infty}^{-s_-+k-1}(M), \quad Pu\in B^{s-k+1}_{2,1},
\end{equation}
hence the pairing $\langle Pu, G_h^*G_h u \rangle$ is finite and
\begin{equation}
\begin{split}
    \Im\left\langle Pu, G_{h}^*G_{h}u \right\rangle =\langle F_{h}u,u \rangle
\end{split}
\end{equation}
with
\begin{equation}
    F_{h}:=\tfrac{i}{2}[P_0, G_{h}^*G_{h}]+\tfrac{G_{h}^*G_{h}Q+QG_{h}^*G_{h}}{2}\in \Psi^{2s_-}(M).
\end{equation}
Note that
\begin{equation}
\begin{split}
    \sigma(F_{h,\epsilon})
    = & \langle \xi \rangle^{2s_-+1-k}\chi_1^2\rho_1^2(hf_1)e^{2b}\\
    & \times \left( \sigma_{k-1}(Q)+\left( s_-+\tfrac{1-k}{2} \right)\tfrac{H_p\langle \xi \rangle}{\langle \xi \rangle}+H_p b \right)\\
    & + \langle \xi \rangle^{2s_-+1-k}e^{2b}\left(\chi_1H_p\chi_1 \rho_1^2(hf_1) +h\chi_1^2(\rho_1\rho_1^{\prime})(hf_1)H_pf_1 \right) \\
    \leq & -C\langle \xi \rangle^{2s_-}\chi_1^2(\rho_1\rho_1^{\prime})(hf_1).
\end{split}
\end{equation}
Let
\begin{equation}
    e_h:=\chi_1\sqrt{(\rho_1\rho_1^{\prime})(hf_1)}, \quad E_h:=\Op(e_h).
\end{equation}
We can choose $\rho_1$ such that $\sqrt{\rho_1\rho_1^{\prime}}\in C_c^{\infty}(\RR)$. Then we find
\begin{equation}
    \langle \xi \rangle^{-2s_-}\sigma\left(-F_{h}-Ch^{-2s_-} E_h^*E_h\right)\geq 0.
\end{equation}
Let $\widetilde{\chi}_1\in C^{\infty}(T^*M)$ such that $\supp\widetilde{\chi}_1\subset \Ell(B_1)$ and $\widetilde{\chi}_1=1$ on $\supp \chi_1$. Let $\widetilde{\rho}_1\in C_c^{\infty}(\RR_+)$ such that $\widetilde{\rho}_1=1$ on $\supp \sqrt{\rho_1\rho_1^{\prime}}$. We put
\begin{equation}
    \widetilde{E}_h:=\Op(\widetilde{\chi}_1\widetilde{\rho}_1(hf_1))\in \Psi^0(M), \quad A:=\Op(\chi_1).
\end{equation}
Let $B_2\in \Psi^0(M)$ such that
\begin{equation}
    \supp \widetilde{\chi}_1\subset \Ell(B_2), \quad \WF(B_2)\subset \Ell(B_1).
\end{equation}
By sharp G\aa rding inequality (see for instance \cite[Proposition E.34]{res}), there exist $C>0$, $\chi\in C_c^{\infty}(M)$ such that for any $N\in \RR$, uniformly in $h$,
\begin{equation}
    \left\langle (-F_h-Ch^{-2s_-}E_h^*E_h)\widetilde{E}_hu,\widetilde{E}_hu \right\rangle\geq -C\|B_2 \widetilde{E}_h u\|^2_{H^{s_--1/2}}-C\|\chi u\|^2_{H^{-N}}.
\end{equation}
By elliptic estimates (see for instance \cite[Theorem E.33]{res}), uniformly in $h$,
\begin{equation}\begin{split}
    & \|E_hu\|_{L^2}\leq C\|E_{h}\widetilde{E}_hu\|_{L^2}+C\|\chi u\|_{H^{-N}}, \\
    & \|B_2\widetilde{E}_hu\|_{H^{s_--1/2}}\leq C\|B_2u\|_{H^{s_--1/2}}+C\|\chi u\|_{H^{-N}},\\
    & \langle F_h\widetilde{E}_hu, \widetilde{E}_hu \rangle=\langle F_hu,u \rangle+C\|\chi u\|_{H^{-N}}.
\end{split}\end{equation}
Hence we find
\begin{equation}
\begin{split}
    h^{-2s_-}\|E_h u\|_{L^2}\leq \left| \langle G_hPu, G_h u \rangle \right| + C\|B_2 u \|_{H^{s_--1/2}}+C\|\chi u\|_{H^{-N}}.
\end{split}
\end{equation}
Note that
\begin{equation}
    |\langle G_h P u, G_h u \rangle|\leq \|G_h P u\|_{B_{2,1}^{\frac{1-k}{2}}}\|G_h u\|_{B_{2,\infty}^{\frac{k-1}{2}}}\leq \|B_1 P u\|_{B_{2,1}^{s_--k+1}}\|Au\|_{B_{2,\infty}^{s_-}}.
\end{equation}
Thus we find
\begin{equation}
    h^{-2s_-}\|E_hu\|_{L^2}\leq \|B_1Pu\|_{B_{2,1}^{s_--k+1}}\|Au\|_{B_{2,\infty}^{s_-}}+C\|B_2 u\|_{H^{s_--1/2}}+C\|\chi u\|_{H^{-N}}.
\end{equation}
Take the supreme over $h>0$ and then use Cauchy-Schwarz inequality and we find
\begin{equation}
\label{beforeremove}
    \|Au\|_{B_{2,\infty}^{s_-}}\leq C\| B_1 P u \|_{B_{2,1}^{s_--k+1}}+C\| B_2u \|_{H^{s_--1/2}}+C\|\chi u\|_{H^{-N}}.
\end{equation}
To remove $\|B_2 u\|_{H^{s_--1/2}}$ from the right hand side, we use the propagation estimates (see for instance \cite[Theorem E.47]{res}): the conditions for $(A, B, B_1)$ in \cite[Theorem E.47]{res} are satisfied by $(B_2, A, B_1)$ in our case. Hence we have
\begin{equation}
\label{propremove}
    \|B_2 u\|_{H^{s_--1/2}}\leq C \|B_1Pu\|_{H^{s_--k+1/2}}+C\|Au\|_{H^{s_--1/2}}+C\|\chi u\|_{H^{-N}}.
\end{equation}
Note that for any $r\in \RR$ we have
\begin{equation}
    H^r\subset B_{2,\infty}^r\subset \cup_{r^{\prime}>0}H^{r-r^{\prime}}, \quad \cap_{r^{\prime}>0}H^{r+r^{\prime}}\subset B_{2,1}^r\subset H^r.
\end{equation}
By the interpolation inequality \cite[Proposition E.21]{res}, for any $\delta>0$, there exists $C(\delta)$ such that
\begin{equation}
    \|Au\|_{H^{s_--1/2}}\leq \delta \|Au\|_{H^{s_--3/4}}+C(\delta)\|\chi u\|_{H^{-N}}\leq C\delta\|Au\|_{B_{2,\infty}^{s_-}}+C(\delta)\|\chi u\|_{H^{-N}}.
\end{equation}
Let $\delta=1/(2C)$ and combine \eqref{beforeremove}, \eqref{propremove}, we get \eqref{apriorisource}.

\noindent\textbf{Step 2. Regularization.}
Now we can prove Theorem \ref{theo1} by using regularizing operators. More precisely, let $X_{\tau}$, $Y_{\tau}$ be as in \eqref{xtaudefi} and Lemma \ref{xylemm}. 
We consider
\begin{equation}\begin{split}
    P_{\tau}:=X_{\tau}P Y_{\tau}=& P+[X_{\tau},P]Y_{\tau}+O(\tau^{\infty})_{\Psi^{-\infty}(M)}\\
    = & P+i\Op(\langle \tau\xi \rangle^{s-s_-}\{ \sigma(P), \langle \tau\xi \rangle^{s_--s} \})+O(1)_{\Psi^{k-2}_0(M)}.
\end{split}\end{equation}
We have $\sigma(\Re P_{\tau})=p$, and near $\Lambda^-$
\begin{equation}\begin{split}
    \sigma_{k-1}(\Im P_{\tau})
    = & q+\langle \tau \xi \rangle^{s-s_-}\{p,\langle \tau \xi \rangle^{s_--s}\} \\
    = & q-\tfrac{s-s_-}{2}\tfrac{\tau^2\langle\xi\rangle^2}{\langle \tau\xi \rangle^2}\tfrac{H_p\langle \xi \rangle}{\langle \xi \rangle}.
\end{split}\end{equation}
Therefore
\begin{equation}\begin{split}
    & \langle \xi \rangle^{1-k}\left( \sigma_{k-1}(\Im P_{\tau})+\left(s+\tfrac{1-k}{2}\right)\tfrac{H_p\langle \xi \rangle}{\langle \xi \rangle} \right) \\
    = & \langle \xi \rangle^{1-k}\left( \sigma_{k-1}(P) +\left( s+\tfrac{1-k}{2}-\tfrac{s-s_-}{2}\tfrac{\tau^2\langle \xi \rangle^2}{\langle \tau\xi \rangle^2} \right)\tfrac{H_p\langle \xi \rangle}{\langle \xi \rangle} \right).
\end{split}\end{equation}
Since
\begin{equation}
    s-\tfrac{s-s_-}{2}\tfrac{\tau^2\langle \xi \rangle^2}{\langle \tau \xi \rangle^2} \geq s_-,
\end{equation}
we know that there exists $T>0$ such that near $\Lambda^-$, 
\begin{equation}
    \int_0^T\langle \xi \rangle^{1-k}\left( \sigma_{k-1}(\Im P_{\tau})+\left( s+\tfrac{1-k}{2} \right)\tfrac{H_p\langle \xi \rangle}{\langle \xi \rangle} \right)\circ\varphi_t dt\leq 0.
\end{equation}
If we put
\begin{equation}
    A_{\tau}:=X_{\tau}AY_{\tau}, \quad B_{1,\tau}:=X_{\tau}B_1Y_{\tau},
\end{equation}
then
\begin{equation}
    A_{\tau}X_{\tau}u\in B_{2,\infty}^{s}, \quad B_{1,\tau}PX_{\tau}u\in B_{2,1}^{s-k+1}.
\end{equation}
Thus we can apply \eqref{apriorisource} to $P_{\tau}$, $s$, $B_{1,\tau}$, $A_{\tau}$ and $X_{\tau}u$ and we find that uniformly in $\tau$ we have
\begin{equation}\begin{split}
    \|X_{\tau}Au\|_{B_{2,\infty}^s} & \leq C\|X_{\tau}B_1Pu\|_{B_{2,1}^{s-k+1}}+C\|\chi u\|_{H^{-N}}\\
    & \leq C\|B_1Pu\|_{B_{2,1}^{s-k+1}}+C\|\chi u\|_{H^{-N}}<\infty.
\end{split}\end{equation}
By Lemma \ref{xylemm} we know
\begin{equation}
    Au\in B_{2,\infty}^s
\end{equation}
and
\begin{equation}
    \|Au\|_{B_{2,\infty}^s}\leq C\|B_1Pu\|_{B_{2,1}^{s-k+1}}+C\|\chi u\|_{H^{-N}}.
\end{equation}
This concludes the proof.
\end{proof}

\section{Proof of sharp sink estimates}
\label{ssinkpf}

We prove Theorem \ref{theo2}, the sharp sink estimates, in this section. The proof we present here is a modification of the proof of \cite[Theorem E.54]{res}.

\begin{proof}[Proof of Theorem \ref{theo2}.]
\textbf{Step 1. A priori estimates.}
We first show that if $P$, $s_+$, $A$, $B$, $B_1$ satisfy conditions in Theorem \ref{theo2}, and $Au\in B_{2,\infty}^{s_+}$, $Bu\in B_{2,1}^{s_+}$, $B_1Pu\in B_{2,1}^{s_+-k+1}$, then \eqref{sharpsink} holds.

Let $P_0$, $Q$ be as in the proof of Theorem \ref{theo1}. Let $\chi_2$, $f_2$ be as in Lemma \ref{sinkfun}. Let $\rho_2\in C^{\infty}_c(\RR)$ such that $\rho_2=1$ near $0$ and $\rho_2^{\prime}\leq 0$ on $[0,\infty)$.
By the assumption \eqref{sinkcri} and \cite[Proposition E.51]{res}, there exists $b\in S^0(T^*M;\RR)$ such that
\begin{equation}
    \langle \xi \rangle^{1-k}\left( \sigma_{k-1}(Q)+\left( s_++\tfrac{1-k}{2} \right)\tfrac{H_p\langle \xi \rangle}{\langle \xi \rangle}+H_p b \right)\leq 0
\end{equation}
near $\Lambda^+$. Now we put
\begin{equation}
    G_h:=\Op(g_h), \quad g_h:=\langle \xi \rangle^{s_++\frac{1-k}{2}}\chi_2\rho_2(h f_2)e^b.
\end{equation}
Note that $\rho_2(h f_2)$ is a symbol in $S^0_{1,0}(T^*M)$. In fact, one can check that for any $\alpha, \beta$, there exists $C_{\alpha,\beta}>0$ that is independent of $h$ such that
\begin{equation}
    \sup_{(x,\xi)\in T^*M}\langle \xi \rangle^{|\beta|}\left| \partial_{x}^{\alpha}\partial_{\xi}^{\beta}\rho_2(h f_2) \right|\leq C_{\alpha, \beta}.
\end{equation}
Similar to the proof of Theorem \ref{theo1}, we have
\begin{equation}
    \Im\langle Pu, G_h^*G_h u \rangle_{L^2}=\langle F_h u, u \rangle_{L^2}
\end{equation}
with
\begin{equation}
    F_h:=\tfrac{i}{2}[P_0,G_h^*G_h]+\tfrac{G_h^* G_h Q + Q G^*_h G_h}{2} \in \Psi^{2s_+}(M).
\end{equation}
Here we used the assumption that $\langle \xi \rangle^{-k}\sigma_{k}(Q)=0$ on $\Lambda^+$.
Note that
\begin{equation}\begin{split}
    \langle \xi \rangle^{-2s_+}\sigma_{2s_+}(F_h)= & \langle \xi \rangle^{1-k}\left( \sigma_{k-1}(Q)+(s_++\tfrac{1-k}{2})\tfrac{H_p\langle \xi \rangle}{\langle \xi \rangle}+H_p b \right)\chi_2^2\rho^2_2(h f_2)e^{2b} \\
    & + \langle \xi \rangle^{1-k}(\chi_2 H_p\chi_2)\rho_2^2(hf_2)e^{2b}+h\langle \xi \rangle^{1-k}\chi_2^2(\rho_2\rho_2^{\prime})(hf_2)(H_p f_2)e^{2b}\\
    \leq & C\langle \xi \rangle^{1-k}(\chi_2 H_p\chi_2)-C\chi_2^2(\rho_2\rho^{\prime}_2)(h f_2).
\end{split}\end{equation}
Now we put
\begin{equation}\begin{split}
    A:= & \Op(\chi_2), \quad B:=\Op\left(\langle \xi \rangle^{\frac{1-k}{2}}\sqrt{\chi_2 H_p \chi_2}\right), \\ E_h:= & \Op\left( \sqrt{\rho_2\rho_2^{\prime}}(hf_2) \right), \quad Y:=\Op\left( \langle \xi \rangle^{s_+} \right).
\end{split}\end{equation}
Let $B_2\in \Psi^0(M)$ such that $\WF(B_2)\subset \Ell(B_1)$, $\supp\chi_2\subset \Ell(B_2)$. Then by sharp G\aa rding inequality, there exist $\chi\in C^{\infty}_c(M)$, $C_1>0$ such that for any $N$ and $u\in C^{\infty}(M)$, we have, uniformly in $h$,
\begin{equation}\begin{split}
    & \langle \left(-C(E_h Y A)^*(E_h Y A)+C(YB)^*(YB)-F_h\right)u,u \rangle \\
    \geq & -C_1\|B_2u\|^2_{H^{s_+-1/2}}-C_1\|\chi u\|^2_{H^{-N}}.
\end{split}\end{equation}
Hence we find
\begin{equation}\begin{split}
    \|E_h Y A u\|^2_{L^2}\leq & C| \langle G_h P u, G_h u \rangle |+C\|Bu\|^2_{H^{s_+}}+C\| B_2u \|^2_{H^{s_+-1/2}} + C\|\chi u\|^2_{H^{-N}}\\
    \leq & C\| B_1Pu \|_{B^{s_+-k+1}_{2,1}}\|Au\|_{B^{s_+}_{2,\infty}} \\ & +C\|Bu\|^2_{H^{s_+}}+C\|B_2u\|^2_{H^{s_+-1/2}}+C\|\chi u\|^2_{H^{-N}}.
\end{split}\end{equation}
Take the supreme for $h>0$ and use Cauchy-Schwarz inequality and we find
\begin{equation}
    \|Au\|_{B^{s_+}_{2,\infty}}\leq C\|B_1Pu\|_{B^{s_+-k+1}_{2,1}}+C\| Bu \|_{H^{s_+}}+\|B_2u\|_{H^{s_+-1/2}}+C\|\chi u\|_{H^{-N}}.
\end{equation}
The $\|B_2 u\|_{H^{s_+-1/2}}$ term can be removed as in the proof of Theorem \ref{theo1}.

\noindent\textbf{Regularization.}
We now prove Theorem \ref{theo2}. 
Since $u\in \mathscr{D}^{\prime}(M)$, there exists $N\in \RR$ such that $Au\in B^{-N}_{2,\infty}$. Let $X_{\tau}$, $Y_{\tau}$ be as in \eqref{xtaudefi} and Lemma \ref{xylemm} with $m=s_++N$. We can choose $N$ large enough such that $m>0$. Then we have
\begin{equation}\begin{split}
    & \langle \xi \rangle^{1-k} \left(\Im(P_{\tau})+\left(s_++\tfrac{1-k}{2}\right)\tfrac{H_p\langle \xi \rangle}{\langle \xi \rangle}\right) \\
    = & \langle \xi \rangle^{1-k}
    \left( \Im(P)+ \left( s_+ + \tfrac{1-k}{2}-\tfrac{s_++N}{2}\tfrac{\tau^2\langle \xi \rangle^2}{\langle \tau \xi \rangle^2} \right)\tfrac{H_p\langle \xi \rangle}{\langle \xi \rangle}\right).
\end{split}\end{equation}
Since
\begin{equation}
    s_++\tfrac{1-k}{2}-\tfrac{s_++N}{2}\tfrac{\tau^2\langle \xi \rangle^2}{\langle \tau \xi \rangle^2} \leq s_++\tfrac{1-k}{2},
\end{equation}
we know there exists $T>0$ such that
\begin{equation}
    \int_{0}^T\langle \xi \rangle^{1-k} \left(\Im(P_{\tau})+\left(s_++\tfrac{1-k}{2}\right)\tfrac{H_p\langle \xi \rangle}{\langle \xi \rangle}\right)\circ \varphi_t dt \leq 0
\end{equation}
near $\Lambda^+$. Note that
\begin{equation}\begin{split}
    & A_{\tau}X_{\tau}u\in B^{s_+}_{2,\infty}, \quad B_{\tau}X_{\tau}u \in B^{2s_++N}_{2,1}\subset B_{2,1}^{s_+}, \\
    & B_{1,\tau}P_{\tau}X_{\tau}u\in B_{2,1}^{2s_++N-k+1}\subset B_{2,1}^{s_+-k+1},
\end{split}\end{equation}
hence by the a priori estimates we have
\begin{equation}\begin{split}
    \|X_{\tau}Au\|_{B_{2,\infty}^{s_+}}
    \leq & C\|X_{\tau}Bu\|_{B_{2,1}^{s_+}}+C\|X_{\tau}B_1Pu\|_{B_{2,1}^{s_+-k+1}}+C\|\chi u\|_{H^{-N}}\\
    \leq & C\|Bu\|_{B_{2,1}^{s_+}}+C\|B_1Pu\|_{B_{2,1}^{s_+-k+1}}+C\|\chi u\|_{H^{-N}}.
\end{split}\end{equation}
Since this is true for any $\tau\in (0,1]$, we conclude that $Au\in B_{2,\infty}^{s_+}$ and \eqref{sharpsink} holds.
\end{proof}

\section{Propagation of singularities}

\label{pspf}

In this section, we modify the proof of \cite[Theorem E.47]{res} to prove the propagation of singularities in Besov spaces. We first construct some functions that are useful in the proof of Theorem \ref{theo3}.
\begin{lemm}
\label{gcons}
Suppose $A, B, B_1\in \Psi^0$ satisfy conditions in Theorem \ref{theo3}. Then for any $\beta>0$, we can construct $g, g_1\in C^{\infty}(T^*M\setminus 0;\RR_{\geq 0})$ such that
\begin{enumerate}
    \item $g$ is homogeneous of order $0$, $\supp g\subset \Ell(B_1)$, $g\geq 0$, $g>0$ on $\WF(A)$, and $|\xi|^{1-k}H_p g \leq -\beta g$ in a conic neighborhood of $(T^*M\setminus 0)\setminus \Ell(B)$;
    \item $g_1$ is homogeneous of order $1$, $g_1>0$ in a conic neighborhood of $\supp g$, and $|\xi|^{1-k}H_p g_1=0$ in a conic neigborhood of $\supp g$.
\end{enumerate}
\end{lemm}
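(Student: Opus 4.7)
The plan is the standard escape-function construction for positive-commutator proofs of propagation of singularities, patterned on \cite[Theorem E.47]{res}: part (1) is done by an explicit integral along the flow, and part (2) by extending a degree-one homogeneous function from a transverse slice via flow invariance. Throughout, write $V := |\xi|^{1-k}H_p$ for the flow generator.

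For (1), since $A$ is compactly supported, $\WF(A)\cap S^*M$ is compact. The control condition in the hypothesis of Theorem \ref{theo3}, together with continuity of $\varphi_t$, openness of $\Ell(B)$ and $\Ell(B_1)$, and this compactness, then yields a single time $T>0$, a small $\delta>0$, and an open conic neighborhood $V_A$ of $\WF(A)$ such that $\varphi_{-u}(V_A)\subset \Ell(B_1)$ for all $u\in[-\delta,T+\delta]$ and $\varphi_{-u}(V_A)\subset \Ell(B)$ for all $u\in[T-\delta,T+\delta]$. Choose $\chi_0\in C^\infty(T^*M\setminus 0;\RR_{\geq 0})$, homogeneous of order zero, supported in $V_A$, and $>0$ on $\WF(A)$; and pick $\psi\in C_c^\infty(\RR;[0,1])$ with $\supp\psi\subset[-\delta,T]$, $\psi\equiv 1$ on $[0,T-\delta]$, $\psi'\geq 0$ on $[-\delta,0]$, and $\psi'\leq 0$ on $[T-\delta,T]$. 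Define
\[
g(x,\xi) := \int_\RR e^{\beta t}\,\chi_0\bigl(\varphi_t(x,\xi)\bigr)\,\psi(t)\,dt.
\]
Homogeneity of order zero is immediate from $\varphi_t\circ d_\lambda = d_\lambda\circ\varphi_t$; positivity on $\WF(A)$ follows from the value of the integrand at $t=0$; and $\supp g\subset \Ell(B_1)$ follows because $g(x,\xi)\neq 0$ forces $(x,\xi)\in\varphi_{-u}(V_A)$ for some $u\in[-\delta,T]$. Differentiating under the integral and reindexing gives
\[
(V+\beta)g = -\int_\RR e^{\beta t}\,\chi_0\bigl(\varphi_t(x,\xi)\bigr)\,\psi'(t)\,dt.
\]
The $[-\delta,0]$ portion is pointwise $\leq 0$; the $[T-\delta,T]$ portion is pointwise $\geq 0$ but vanishes outside $\bigcup_{u\in[T-\delta,T]}\varphi_{-u}(V_A)\subset\Ell(B)$. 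Hence $(V+\beta)g\leq 0$ on the open conic set $(T^*M\setminus 0)\setminus\bigcup_{u\in[T-\delta,T]}\varphi_{-u}(V_A)$, which is a conic neighborhood of $(T^*M\setminus 0)\setminus\Ell(B)$.

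For (2), work in an open conic neighborhood $W$ of $\supp g$ contained in the flowout tube $\bigcup_{u\in[-\delta,T+\delta]}\varphi_{-u}(V_A)$. Because the flow is parametrized over the finite interval $[-\delta,T+\delta]$ starting from the compact set $\WF(A)\cap S^*M$, $V$ has no closed orbits in $W$ (after shrinking $W$ slightly away from any critical points of $p$, which lie in $\Ell(B)$ by the control condition). Construct a smooth conic codimension-one section $\Sigma\subset W$ transverse to $V$ meeting each flow line in $W$ exactly once by taking a transverse section $\bar{\Sigma}\subset S^*M$ for the induced flow $\bar\varphi_t$ and lifting conically. Set $g_1|_\Sigma:=|\xi|$ and extend by $g_1(\varphi_t(\sigma)):=g_1(\sigma)$. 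Then $Vg_1\equiv 0$ on $W$ by construction, $g_1>0$ on $W$, and homogeneity of order one follows from the identity $g_1(d_\lambda\varphi_t(\sigma))=g_1(\varphi_t(d_\lambda\sigma))=g_1(d_\lambda\sigma)=\lambda g_1(\sigma)$, using $d_\lambda\circ\varphi_t=\varphi_t\circ d_\lambda$ and conicity of $\Sigma$.

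The main technical obstacle is not any single identity but the bookkeeping in step one: one must simultaneously shrink $V_A$, $\delta$, and $\chi_0$ so that all three containments $\varphi_{-u}(V_A)\subset\Ell(B_1)$, $\varphi_{-u}(V_A)\subset\Ell(B)$ (for $u$ near $T$), and $\supp\chi_0\subset V_A$ hold at once — this is where the control condition is really used. Part (2) is then a standard flowout construction, straightforward once the finite-time tubular structure of $\supp g$ is established.
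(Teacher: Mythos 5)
Your construction of $g$ by the weighted flow integral
\[
g(x,\xi)=\int_{\RR} e^{\beta t}\,\chi_0\bigl(\varphi_t(x,\xi)\bigr)\psi(t)\,dt
\]
is a valid (and arguably cleaner) alternative to the paper's method, which instead works in tubular coordinates $\Phi(t,\cdot)=e^{tV}(\cdot)$ along the flow and sets $g\circ e^{tV}=\psi_0(t)\varphi_0(\cdot)$ with $\psi_0'\leq -\beta\psi_0$ outside a small interval. The two encode the same idea; your version buys a one-line verification of $(V+\beta)g\leq 0$ by integration by parts, at the price of having to track the support of $\psi'$ more carefully. Your construction of $g_1$ by flowing out $|\xi|$ from a transverse conic section is essentially identical to the paper's.

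There is, however, a genuine gap in the claim that compactness of $\WF(A)\cap S^*M$, together with openness of $\Ell(B)$, $\Ell(B_1)$ and continuity of $\varphi_t$, ``yields a single time $T>0$, a small $\delta>0$, and an open conic neighborhood $V_A$ of $\WF(A)$'' with the three stated containments. The control condition in Theorem~\ref{theo3} assigns to each $(x,\xi)\in\WF(A)$ its own exit time $T_{(x,\xi)}$, and there is no reason a single $T$ works uniformly. Taking $T=\max_j T_j$ over a finite cover fails because the hypothesis gives no information about $\varphi_t(x,\xi)$ for $t<-T_{(x,\xi)}$ (it may leave both $\Ell(B_1)$ and $\Ell(B)$); taking $T=\min_j T_j$ fails because $\varphi_{-T}(x,\xi)$ need not lie in $\Ell(B)$ for points whose exit time is larger. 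This is not a matter of ``simultaneously shrinking $V_A$, $\delta$ and $\chi_0$''; it is precisely why the paper's proof first treats the case $\WF(A)=\{(x_0,\xi_0)\}$ a single point with its own $T$, and then sums $g=\sum_j g_{(x_j,\xi_j)}$, $g_1=\sum_j g_{1,(x_j,\xi_j)}$ over a finite cover of $\WF(A)$. Your integral construction is entirely compatible with this decomposition: build $g_j$ for each patch using its own $T_j$, $\delta_j$, $\chi_0^{(j)}$, observe that $(V+\beta)g_j\leq 0$ away from $\bigcup_{t\in[T_j-\delta_j,T_j]}\varphi_{-t}(V_j)\subset\Ell(B)$, and sum; the sum still satisfies $(V+\beta)g\leq 0$ outside a compact subset of $\Ell(B)$ because each summand does, and positivity on $\WF(A)$ survives the sum. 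You should add this finite-cover step explicitly, both for $g$ and for $g_1$ (where one must check that the sum of the flow-invariant $g_{1,j}$'s remains positive on a conic neighborhood of $\supp g$).
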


\begin{proof}
We identify the boundary of $T^*M\setminus 0$ with the co-sphere bundle $S^*M$ and conic subsets of $T^*M\setminus 0$ with subsets of $S^*M$ by using the radial compactification $\iota: T^*M\setminus 0\to S^*M, (x,\xi)\mapsto (x,\xi/|\xi|)$.

We first assume that $\WF(A)\subset S^*M$ is a single point $\{(x_0,\xi_0)\}$ and there exists $T>0$ such that $e^{-TV}(x_0,\xi_0)\in \Ell(B)$, $e^{tV}(x_0,\xi_0)\in \Ell(B_1)$ for $t\in [-T,0]$. Here $V:=\iota_*(|\xi|^{1-k}H_p)$ which is homogeneous of order $0$ hence extends to a smooth vector field on $S^*M$.

To construct $g$, we only need to construct $\theta\in C^{\infty}_{c}(S^*M)$ such that $\supp \theta\subset \Ell(B_1)$, $\theta\geq 0$, $\theta>0$ on $\WF(A)$, and $V\theta\leq -\beta\theta$ in a neighborhood of $S^*M\setminus \Ell(B)$.
Let $\Sigma\subset S^*M$ be a hypersurface passing through $(x_0,\xi_0)$ such that the map
\begin{equation}
    \Phi: (-T-\delta,\delta)\times \Sigma\to S^*M, \quad (t,(x,\xi)) \mapsto e^{tV}(x,\xi)
\end{equation}
is a diffeomorphism onto its image. We can assume $\delta$ and $\Sigma$ is small enough such that 
\begin{equation}
    \Phi((-T-\delta, -T+\delta)\times \Sigma)\subset \Ell(B); \quad \Phi((-T-\delta,\delta)\times \Sigma)\subset \Ell(B_1).
\end{equation}
Let $\psi_0\in C^{\infty}_c((-T-\delta,\delta);\RR_{\geq 0})$ such that 
\begin{equation}
    \psi_0\geq 0, \quad \psi_0(0)>0, \quad \psi_0^{\prime}\leq -\beta\psi_0 ~~~~\text{outside}~~~~(-T-\delta/2,\delta/2). 
\end{equation}
Let $\varphi_0 \in C_c^{\infty}(\Sigma)$ such that $\varphi_0(x_0,\xi_0)>0$. Now for $(x,\xi)\in \Phi((-T-\delta,\delta)\times \Sigma)$, we define
\begin{equation}
    \theta\circ e^{tV}(x,\xi):=\psi_0(t)\varphi_0(x,\xi)
\end{equation}
and extend by zero outside. We put $g:=\iota^*\theta$.

We now construct $g_1$. Let $\varphi_1\in C^{\infty}_c(\Sigma)$ such that $\varphi_1=1$ on $\supp \varphi_0$. We define
\begin{equation}
    \theta_1\circ e^{tV}(x,\xi):=\varphi_1(x,\xi)|\xi|.
\end{equation}
We put $g_1:=\iota^*\theta_1$.

Finally, in the general case, for every $(x,\xi)\in \WF(A)$, we can construct $g_{(x,\xi)}$, $g_{1,(x,\xi)}$ in a small conic neighborhood of $(x,\xi)$. If such neighborhoods of $(x_1,\xi_1), \cdots, (x_m, \xi_m)$ form an open covering of $\WF(A)$, then we can put
\begin{equation}
    g:=\sum g_{(x_j,\xi_j)}, \quad g_1:=\sum g_{1,(x_j,\xi_j)}.
\end{equation}
One can check $g$ and $g_1$ satisfy the conditions.
\end{proof}

We can now prove Theorem \ref{theo3}.
\begin{proof}[Proof of Theorem \ref{theo3}.]
For simplicity, we only prove the a priori estimates, that is, we assume $u\in\mathscr{D}^{\prime}(M)$ satisfies $Au\in B_{2,1}^s$, $Bu\in B_{2,1}^{s}$ and $B_1Pu\in B_{2,1}^{s-k+1}$ in the proof of \eqref{gbprop}. The estimates \eqref{gbprop} then follows the same regularization argument as in the proof of Theorem \ref{theo1} and of Theorem \ref{theo2}. Similarly, in the proof of \eqref{gdbprop}, we assume $Au\in B_{2,\infty}^s$, $Bu\in B_{2,\infty}^s$ and $B_1Pu\in B_{2,\infty}^{s-k+1}$.

We now prove the a priori estimates of \eqref{gbprop}. Let $g, g_1$ be as in Lemma \ref{gcons}. Let $\rho_3\in C^{\infty}_c(\RR_{+})$ such that $\rho_3=1$ on $[1,2]$, $\rho_3=0$ on $(0,1/2)\cup (4,\infty)$. We now put
\begin{equation}
    G_h:=\Op(\langle \xi \rangle^{s+\frac{1-k}{2}}g\rho_3(hg_1))\in \Psi^{s+\frac{1-k}{2}}. 
\end{equation}
Here we regard $\rho_3(hg_1)$ as a symbol of order $0$ since its seminorms in $S^0_{1,0}(T^*M)$ are uniformly bounded in $h$.
We now compute
\begin{equation}
\label{commu}
    \Im\langle Pu, G_h^*G_h u \rangle=\langle \tfrac{i}{2}[P_0, G^*_hG_h]u,u \rangle+\langle QG_hu,G_h u \rangle+\langle \Re(G_h^*[G_h,Q])u, u \rangle.
\end{equation}

For the first term on the right hand side of \eqref{commu}: note that
\begin{equation}\begin{split}
    \sigma\left( \tfrac{i}{2}[P_0, G_h^*G_h] \right)
    = & \langle \xi \rangle^{2s}\left( \left(s+\tfrac{1-k}{2}\right) g^2 \langle \xi \rangle^{-k}H_p\langle \xi \rangle +g \langle \xi \rangle^{1-k}H_p g \right)\rho_3^2(hg_1)\\
    \leq & (C_1-\beta)\langle \xi \rangle^{2s}g^2\rho_3^2(hg_1)
\end{split}\end{equation}
near $(T^*M\setminus 0)\setminus \Ell(B)$.

Let 
\begin{equation}
    K:=\{(C_1-\beta)\langle \xi \rangle^{2s}g^2\rho_3^2(hg_1)-\sigma\left( \tfrac{i}{2}[P_0, G_h^*G_h] \right)<0\},
\end{equation}
then $K\subset \Ell(B)$. Let $\widetilde{B}\in \Psi^s(M)$ such that
\begin{equation}
    K\subset \Ell(\widetilde{B}), \quad \WF(\widetilde{B})\subset \Ell(B).
\end{equation}
Put
\begin{equation}
    W_h:=\Op\left(\langle \xi \rangle^{s}g\rho_3(hg_1)\right)\in \Psi^{s}(M),
\end{equation}
\begin{equation}\begin{split}
    Z_h:=(C_1-\beta)W_h^*W_h-\tfrac{i}{2}[P_0, G_h^*G_h]+C\left(\Op(\rho_3(hg_1))\widetilde{B}\right)^*\left( \Op(\rho_3(hg_1)\widetilde{B}) \right).
\end{split}\end{equation}
Then there exists $C>0$ such that $Z_h\in \Psi^{2s}(M)$ with seminorms of $\sigma(Z_h)$ uniformly bounded in $h$ in the symbol class $S^{2s}(T^*M)$, and we have
\begin{equation}
    \langle \xi \rangle^{2s}\sigma(Z_h)\geq 0.
\end{equation}

Let $\widetilde{g}\in C^{\infty}(T^*M)$, $\widetilde{\rho}_3\in C_c^{\infty}((1/2,4))$ such that $\supp \widetilde{g}\subset \Ell(B_1)$, $\widetilde{g}=1$ on $\supp g$, $\widetilde{\rho}_3=1$ on $\supp \rho_3$. Put
\begin{equation}\begin{split}
    \widetilde{W}_h:=\Op(\widetilde{g}\widetilde{\rho}_3(hg_1))\in \Psi^0(M), \quad B_2:=\Op(\widetilde{g})\in \Psi^0(M).
\end{split}\end{equation}
Then by sharp G\aa rding inequality, there exist $C>0$ and $\chi\in C_c^{\infty}(M)$ such that
\begin{equation}
    \langle Z_h \widetilde{W}_hu, \widetilde{W}_hu \rangle\geq -C\|B_2\widetilde{W}_hu\|^2_{H^{s-1/2}}-C\|\chi u\|^2_{H^{-N}}.
\end{equation}
That is,
\begin{equation}\begin{split}
    \langle \tfrac{i}{2}[P_0, G_h^*G_h]\widetilde{W}_h u, \widetilde{W}_h u \rangle \leq &  (C_1-\beta)\|W_h\widetilde{W}_h u\|_{L^2}^2+\|\Op(\rho_3(hg_1))\widetilde{B}\widetilde{W}_h u\|_{L^2}^2 \\
    & + \|B_2\widetilde{W}_h u\|_{H^{s-1/2}}^2+C\|\chi u\|_{H^{-N}}^2.
\end{split}\end{equation}
By elliptic estimates, this implies
\begin{equation}\begin{split}
\label{p1}
    \langle \tfrac{i}{2}[P_0, G_h^*G_h]u, u \rangle \leq & (C_1-\beta)\|W_h u\|_{L^2}^2+ \|\Op(\rho_3(hg_1))\widetilde{B}u\|_{L^2}^2 \\
    & + \|B_2\widetilde{W}_hu\|_{H^{s-1/2}}^2+C\|\chi u\|_{H^{-N}}^2
\end{split}\end{equation}

For the second term on the right hand side of \eqref{commu}: since $\langle \xi \rangle^{-k}\sigma(\Im P)=-\langle \xi \rangle^{-k}q\leq 0$ on $\WF(B_1)$, again by sharp G\aa rding inequality, we have
\begin{equation}\begin{split}
\label{p2}
    \langle (\Im P)G_hu,G_hu \rangle
    \leq & C\|B_2G_hu\|^2_{H^{\frac{k-1}{2}}}+C\|\chi u\|_{H^{-N}}^2 \\
    \leq & C_2\|W_h u\|_{H^s}^2+C\| \chi u \|_{H^{-N}}^2.
\end{split}\end{equation}

For the last term on the right hand side of \eqref{commu}: note that the principal symbol of $G^*_h[G_h,P]$ is purely imaginary, we have $\Re \left( G_h^*[G_h,Q] \right)\in \Psi^{2s-1}(M)$ with operator norms bounded uniformly in $h$. Hence by the elliptic estimate, \cite[Theorem E.33]{res}, we have
\begin{equation}
    \langle \Re\left( G_h^*[G_h,Q] \right)\widetilde{W}_hu,\widetilde{W}_hu \rangle\leq C\|B_2\widetilde{W}_hu\|_{H^{s-1/2}}^2+C\|\chi u\|_{H^{-N}}^2.
\end{equation}
This implies 
\begin{equation}
\label{p3}
    \langle \Re(G_h^*[G_h,Q])u, u \rangle\leq C\|B_2 \widetilde{W}_h u\|_{H^{s-1/2}}^2+C\|\chi u \|_{H^{-N}}^2.
\end{equation}
Combine \eqref{p1}, \eqref{p2}, \eqref{p3} and we get
\begin{equation}\begin{split}
    \Im\langle Pu, G_h^*G_h u \rangle\leq & (C_1+C_2-\beta)\|W_h u\|_{L^2}^2 + \|\Op(\rho_3(hg_1))\widetilde{B}u\|_{L^2} \\
    & + C\|B_2 \widetilde{W}_hu\|_{H^{s-1/2}}^2+C\|\chi u\|_{H^{-N}}^2.
\end{split}\end{equation}
Now we choose $\beta=C_1+C_2+1$ and we find
\begin{equation}\begin{split}
    \|W_h u\|_{L^2}^2 \leq & \|\Op(\rho_3(hg_1))\widetilde{B}u\|^2_{L^2}+|\langle Pu, G_h^*G_h u \rangle| \\
    & + C\|B_2 \widetilde{W}_h u\|_{H^{s-1/2}}^2+C\|\chi u\|_{H^{-N}}^2\\
    \leq & \|\Op(\rho_3(hg_1))\widetilde{B}u\|^2_{L^2}+\|G_h Pu\|_{H^{\frac{1-k}{2}}}\|G_h u\|_{H^{\frac{k-1}{2}}}\\
    & + C\|B_2 \widetilde{W}_h u\|_{H^{s-1/2}}^2+C\|\chi u\|_{H^{-N}}^2.
\end{split}\end{equation}
By Cauchy-Schwarz inequality and notice that
\begin{equation}
    \|G_u u\|_{H^{\frac{k-1}{2}}}\leq C\| W_h u \|_{L^2}, \quad \|G_h Pu\|_{H^{\frac{1-k}{2}}}\leq C\|W_h Pu\|_{H^{1-k}},
\end{equation}
we find
\begin{equation}\begin{split}
    \|W_h u\|_{L^2} \leq &  \|\Op(\rho_3(hg_1))\widetilde{B}u\|_{L^2}+\|W_h Pu\|_{H^{1-k}} \\
    & +C\|B_2 \widetilde{W}_h u\|_{H^{s-1/2}}+C\|\chi u\|_{H^{-N}}.
\end{split}\end{equation}
Therefore we have
\begin{equation}
    \|Au\|_{B_{2,\infty}^s}\leq C\|Bu\|_{B_{2,\infty}^s}+C\|B_1 Pu\|_{B_{2,\infty}^{s-k+1}}+C\|B_2 u\|_{B_{2,\infty}^{s-1/2}}+C\|\chi u\|_{H^{-N}},
\end{equation}
and
\begin{equation}
    \|Au\|_{B_{2,1}^s}\leq C\|Bu\|_{B_{2,1}^s}+C\|B_1 Pu\|_{B_{2,1}^{s-k+1}}+C\|B_2 u\|_{B_{2,1}^{s-1/2}}+C\|\chi u\|_{H^{-N}}.
\end{equation}
This concludes the proof.
\end{proof}

\end{document}